\newcommand{\map}[2]{\,{:}\,#1\!\longrightarrow\!#2}
\newcommand{\R}{\mathbb{R}}
\newcommand{\C}{\mathbb{C}}
\title{A new Evans function for quasi-periodic solutions of the linearised sine-Gordon equation}
\newtheorem{Theorem}{Theorem}
\theoremstyle{definition}
\newtheorem{corollary}[Theorem]{Corollary}
\author{
	W.A. Clarke \\
	School of Mathematics and Statistics\\
	University of Sydney\\
	\texttt{wcla7359@uni.sydney.edu.au} \\
	\And
	R. Marangell \\
	School of Mathematics and Statistics\\
	University of Sydney\\
	\texttt{robert.marangell@sydney.edu.au} \\
}
\begin{document}
	\maketitle
	\begin{abstract}
		We construct a new Evans function for quasi-periodic solutions to the linearisation of the sine-Gordon equation about a periodic travelling wave. This Evans function is written in terms of fundamental solutions to a Hill's equation. Applying the Evans-Krein function theory of \cite{KM} to our Evans function, we provide a new method for computing the Krein signatures of simple characteristic values of the linearised sine-Gordon equation. By varying the Floquet exponent parametrising the quasi-periodic solutions, we compute the linearised spectra of periodic travelling wave solutions of the sine-Gordon equation and track dynamical Hamiltonian-Hopf bifurcations via the Krein signature. Finally, we show that our new Evans function can be readily applied to the general case of the nonlinear Klein-Gordon equation with a non-periodic potential.
	\end{abstract}


	\section{Introduction}
	We consider the sine-Gordon equation,
	\begin{align}
	u_{tt} - u_{xx} + \sin(u) = 0, \label{eq:sG}
	\end{align}
	where $ u(x,t) \map{\R \times [0,+\infty)}{\R} $. This equation has been used in the modelling of a number of different physical and biological systems. For example, \cref{eq:sG} models the electrodynamics of a long Josephson junction arising in the theory of superconductors \cite{BP,DDKS}. Solitary wave solutions to \cref{eq:sG} have been used to describe the dynamics of DNA as it interacts with RNA-polymerase \cite{DG}. More recent research has seen sine-Gordon solitons used as scalar gravitational fields in the theory of general relativity, the solutions of which are solitonic stars and black holes \cite{CFMT}. \Cref{eq:sG} can also be derived from classical mechanics applied to a mechanical transmission line in which pendula are coupled to their nearest neighbours by springs obeying Hooke's law \cite{Knobel}. See \cite{BEM} for an extensive list of applications of the sine-Gordon equation.\par
	In this paper, we focus on the problem of spectral stability of periodic solutions to the sine-Gordon equation. In \cite{Scott}, Scott correctly attributed spectral stability and instability to various types of periodic wavetrains, however this was rigorously proved only recently in \cite{Jones13} in which the authors related the sine-Gordon equation to a Hill's equation using Floquet theory and known results on Hill's equation \cite{Jones13, MW}. Spectral stability plays an important role in the determination of the orbital stability of periodic travelling wave solutions, and this is a current topic of investigation. Under co-periodic perturbations, spectral instability of a periodic solution implies orbital instability in several contexts: for subluminal, librational solutions to the sine-Gordon equation \cite{Natali2011}; and for subluminal solutions to a nonlinear Klein-Gordon equation \cite{PN2014}. For the nonlinear Klein-Gordon equation with periodic potential, subluminal rotational waves are orbitally stable with respect to co-periodic perturbations \cite{PP2016}. \par
	There are a number of approaches taken in order to determine the spectral stability of travelling waves; in \cite{Stanislavova}, Stanislavova and Stefanov developed a stability index for travelling wave solutions to second order in time PDEs (including the Klein-Gordon-Zakharov system). The Evans function is another tool that has been used to investigate spectral stability of travelling waves. Evans used this eponymous function while studying the equations governing electrical pulses in nerve axons \cite{Evans}. Jones was the first to coin the term \textit{Evans function} in \cite{Jones84}, where he used this function to prove the stability of travelling wave solutions to the Fitzhugh-Nagumo equations close to a singular limit of the equations. Alexander, Gardner and Jones in \cite{AGJ1990}, and Gardner in \cite{Gar1993,Gar1997}, developed much of the theory and approach we use in this paper to construct and apply a periodic Evans function to problems of spectral stability. Evans function theory has been applied to a general nonlinear Klein-Gordon equation \cite{Lafortune}, and also more specifically to a perturbed sine-Gordon equation \cite{DDGV}. In the previous cases, the Evans functions used in the analyses of nonlinear Klein-Gordon type equations comes from a classical construction of the Evans function based on the exterior product of solutions. In \cite{Jones14}, the authors instead use Floquet theory to construct a periodic Evans function for quasi-periodic solutions of the nonlinear Klein-Gordon equation.\par
	This paper combines results for Hill's equation in \cite{Jones13} and the periodic Evans function from \cite{Jones14} in order to construct a new Evans function for quasi-periodic solutions to the linearised sine-Gordon equation. We show how this new Evans function can be used to calculate the Krein signature - a stability index that is used to detect Hamiltonian-Hopf bifurcations. In particular, we apply the method of \cite{KM} to adapt our Evans function to an Evans-Krein function, the derivatives of which can be used to calculate Krein signatures. There are a number of different methods of calculating the Krein signatures of purely imaginary eigenvalues; Kollar, Deconinck and Trichtchenko use the dispersion relation of the linearisation of a Hamiltonian PDE to calculate the Krein signatures of colliding eigenvalues on the imaginary axis, applying this to small-amplitude periodic travelling wave solutions to the generalized KdV equation \cite{KDT2019} and the Kawahara equation \cite{TDK2018}. H\v{a}r\v{a}gu{\c{s}} and Kapitula also use the dispersion relation to deduce the spectral stability of periodic wave solutions to NLS-type equations \cite{HK2008}. In the case of the generalized KdV equation, Bronski, Johnson and Kapitula used geometric information from the linearised ODE in the calculation of an instability criterion which takes into account the number of potential instabilities resulting from imaginary eigenvalues with negative Krein signature \cite{BJK2011}. In \cite{Kap2010}, Kapitula constructs the Krein matrix, a meromorphic function of the spectral parameter which can be used to calculate the Krein signatures of eigenvalues with non-zero imaginary part or real eigenvalues with negative Krein signature. Kapitula, Kevrekedis and Yan have applied the Krein matrix to the study of Bose-Einstein condensates governed by the Gross-Pitaevskii equation \cite{KKY2013}, and Bronski, Johnson and Kollar have used the Krein matrix in their development of an instability index theory for quadratic operator pencils \cite{BJK2014}.\par
	Our new Evans function leverages the simplicity of Hill's equation, which translates into a more elegant calculation of Krein signatures when compared to the Evans function of \cite{Jones14} and allows for tracking of Hamiltonian-Hopf bifurcations in terms of the Floquet exponent. This method distinguishes itself from the stationary methods of \cite{Jones13, Marangell2015} which instead calculate the zeroes of bespoke functions in order to detect the regions of the spectrum that exhibit Hamiltonian-Hopf instabilities. We focus firstly on the sine-Gordon equation before extending our results to the nonlinear Klein-Gordon equation with $ C^{2} $ potential. This is because we believe it is more illustrative to work through the specific example of the sine-Gordon equation which has both librational and rotational solutions. The results for sine-Gordon are well-established in the literature, and provide a check on the correctness of our numerical results.
	\subsection{Set-up of the eigenvalue problem}
	In travelling wave coordinates $ z = x - ct, \tau = t $, a travelling wave solution $  \hat{u}(z,\tau) $ to \cref{eq:sG} satisfies:
	\begin{align}
	(c^2 - 1)\hat{u}_{zz} - 2c\hat{u}_{z\tau} + \hat{u}_{\tau \tau} + \sin(\hat{u}) = 0. \label{eq:travellingsg}
	\end{align}
	A standing wave solution $ \hat{U}(z) $ in travelling wave coordinates will be independent of $ \tau $ and hence satisfies:
	\begin{align}
	(c^2 - 1)\hat{U}_{zz}  + \sin(\hat{U}) = 0, \quad c \neq 1. \label{eq:standwavesg}
	\end{align}
	Integrating \cref{eq:standwavesg} with respect to $ z $ yields:
	\begin{align*}
	\frac{1}{2}(c^{2} - 1)\hat{U}_{z}^{2} + 1 - \cos(\hat{U}) = E,
	\end{align*}
	where $ E $ is a constant of integration which we interpret as the total energy of the system. We follow the results established in \cite{Jones13} and \cite{Marangell2015}, where $ \hat{U}(z) $ is assumed to be periodic modulo $ 2\pi $. We denote the fundamental period of $ \hat{U} $ as $ T $, so that $ \hat{U}(z + T) = \hat{U}(z) \; (\mathrm{mod}\; 2\pi) $. Linearising \cref{eq:travellingsg} about this periodic standing wave solution, we write $ \hat{u} = \hat{U} + \epsilon p(z)e^{\lambda \tau} $ with $ \epsilon \ll 1 $ and equate $ O(\epsilon) $ terms, which yields the spectral problem:
	\begin{align}
	(c^2-1)p'' - 2c \lambda p' + \left(\lambda^2 + \cos(\hat{U})\right)p = 0. \label{eq:firstlsg}
	\end{align}
	The (Floquet) spectrum $ \sigma $ consists of all $ \lambda $ such that $ p \map{\R}{\C} $ is bounded. In \cite[Proposition 3.9]{Jones14}, Jones et. al. prove that $ \sigma $ has Hamiltonian symmetry $ \sigma = \sigma^{*} = -\sigma = -\sigma^{*} $. Consequently, any $ \lambda \in \sigma $ with non-zero real part implies an unstable eigenvalue. We make the substitution $ \lambda = i\zeta $ and henceforth use the phrase \textit{linearised sine-Gordon equation} to mean:
	\begin{align}
	p'' - \frac{2ic \zeta}{c^2-1} p' + \left(-\frac{\zeta^2}{c^2-1} + \frac{\cos(\hat{U})}{c^2-1}\right)p = 0. \label{eq:lsg}
	\end{align}
	An equivalent condition for $ \zeta \in \sigma $ is the existence of a non-trivial solution $ p(z) $ which can be written in Bloch form \`a la \cite{Marangell2015}:
	\begin{align*}
	p(z) = e^{-i\theta z/T}P(z),
	\end{align*} 
	where $ \theta \in \R $ is called the \textit{Floquet exponent}, and $ P(z) = P(z + T) $.	The solutions $ p(z) $ are quasi-periodic, since:
	\begin{align}
	p(z) = e^{i\theta}p(z+T). \label{eq:quasiperiodic}
	\end{align}
	We now reframe the spectral problem in \cref{eq:lsg} using the formalism of operator pencils, in particular drawing on the work of Markus \cite{Markus1988} and Kato \cite{Kato}. Given linear operators $ L_{0}, L_{1}, \dots, L_{n} $ with $L_{i} \map{X}{Y} $ for Banach Spaces $ X $ and $ Y $, then
	\begin{align*}
	\mathcal{L}(\lambda) \coloneqq  L_{n}\lambda^{n} +  L_{n-1}\lambda^{n-1} + \dots + L_{0}
	\end{align*}
	defines a polynomial operator pencil of degree $ n $ depending on the complex variable $ \lambda $ in an open set $ \lambda \in S \subset \mathbb{C}$ \cite[\S12.1]{Markus1988}. \Cref{eq:lsg} can be rewritten in the form $ \mathcal{L}_{SG}(\zeta)p = 0 $ where
	\begin{align}
	\mathcal{L}_{SG}(\zeta) \coloneqq -\frac{\zeta^2}{c^2-1}\mathbb{I} - \frac{2ic\zeta}{c^2-1}\partial_{z}+\left (\partial_{z}^2 + \frac{\cos(\hat{U})}{c^2-1}\right ) \label{eq:lsgpencil}
	\end{align}
	is a quadratic operator pencil. We narrow our focus to operator pencils which are holomorphic families of type (A). Such pencils $ \mathcal{L}(\lambda) $ have a domain $ B $ that is independent of the spectral variable $ \lambda $, and for each $ u \in B $, $ \mathcal{L}(\lambda)u $ is a holomorphic function of $ \lambda $. Kollar and Miller note in \cite{KM} that $ \mathcal{L}_{SG} $ defined in \cref{eq:lsgpencil}, acting on the spaces $ X = Y = L^{2}([0,T]) $ with domain $ D_{0} = C^{2}([0,T]) $, is a holomorphic family of type (A) with compact resolvent. Moreover, when $ \zeta \in \R $, $ \mathcal{L}_{SG}(\zeta) $ is self-adjoint. These qualities are necessary for Krein signatures to be well-defined \cite[Theorem 3.3]{KM}.\par
	As in \cite{KM}, for $ \mathcal{L}(\lambda) $ a holomorphic family of type (A), we say that $ \lambda_{0} \in \C $ is a \textit{characteristic value} if there exists a \textit{characteristic vector} $ u \neq 0 $ such that $ \mathcal{L}(\lambda_{0})u = 0$. The \textit{geometric multiplicity} of $ \lambda_{0} $ is $ \mathrm{dim}(\mathrm{ker}(\mathcal{L}(\lambda_{0}))) $. If we also assume that $ \mathcal{L}(\lambda) $ is self-adjoint and has compact resolvent, then the eigenvalue problem
	\begin{align*}
	\mathcal{L}(\lambda)u(\lambda) = \mu(\lambda)u(\lambda)
	\end{align*}
	can be solved for analytic functions $ u(\lambda), \mu(\lambda) $ at $ \lambda = \lambda_{0} $. In particular, if $ \text{dim}(\text{ker}(\mathcal{L}(\lambda_{0}))) = k$, then there exist exactly $ k $ analytic functions $ \mu_{1}(\lambda), \mu_{2}(\lambda), \dots, \mu_{k}(\lambda) $ called \textit{eigenvalue branches}, which vanish at $ \lambda = \lambda_{0} $. If we let $ m_{i} $ be the order of vanishing of the eigenvalue branch $ \mu_{i}(\lambda) $ at $ \lambda_{0} $ for $ 1 \leq i \leq k $, that is:
	\begin{align*}
	\mu_{i}(\lambda_{0}) = \mu_{i}'(\lambda_{0}) = \dots = \mu_{i}^{(m_{i}-1)}(\lambda_{0}) = 0, \quad \mu_{i}^{(m_{i})}(\lambda_{0}) \neq 0,
	\end{align*}
	then the algebraic multiplicity $ M $ of a characteristic value $ \lambda_{0} \in \R $ is given by:
	\begin{align*}
	M = \sum_{i = 1}^{k}m_{i}.
	\end{align*}
	Our main objective in this paper is to construct a new Evans function $ D(\zeta;\theta) $, $ D\map{\C\times [0,2\pi)}{\C} $ whose roots coincide exactly with the isolated characteristic values $ \zeta_{0} $ of $ \mathcal{L}_{SG}(\zeta) $ for a given $ \theta $, with $ D(\zeta_{0};\theta) $ vanishing to the order of algebraic multiplicity of $ \zeta_{0} $. Our secondary objective is to use this Evans function to calculate the Krein signatures of simple characteristic values of $ \mathcal{L}_{SG}(\zeta) $. A characteristic value is called \textit{simple} when its algebraic and geometric multiplicities are both 1. Kollar and Miller provide a full treatment of Krein signature theory in \cite{KM}, however the procedure for calculating the Krein signature for simple, isolated characteristic values is straightforward given the pencils in this paper. In particular, for an isolated characteristic value $ \lambda_{0} $ and its single eigenvalue branch $ \mu(\lambda) $ which vanishes to order 1 at $ \lambda = \lambda_{0} $, the graphical Krein signature can be calculated from \cite[Definition 3.5]{KM}:
	\begin{align}
	\kappa(\lambda_{0}) = \mathrm{sign}\left(\frac{d}{d\lambda}\mu(\lambda_{0})\right). \label{eq:KreinSig}
	\end{align}
	\section{Spectrum of the linearised sine-Gordon equation} \label{sec:s2}
	We begin by surveying the results of \cite{Jones13} and \cite{Marangell2015}. For each $ \zeta \in \sigma $, we define the \textit{principal fundamental solution matrix} as: 
	\begin{align}
	\mathbb{F}(z;\zeta) \coloneqq \begin{pmatrix}
	p_{1}(z;\zeta) & p_{2}(z;\zeta)\\
	p_{1}'(z;\zeta) & p_{2}'(z;\zeta)
	\end{pmatrix}, \label{eq:pfsmLSG}
	\end{align}
	where $ p_{1}(z;\zeta) $ and $ p_{2}(z;\zeta) $ are the unique solutions of \cref{eq:lsg} satisfying the initial conditions:
	\begin{align}
	\mathbb{F}(0;\zeta) = \begin{pmatrix}
	p_{1}(0;\zeta) & p_{2}(0;\zeta)\\
	p_{1}'(0;\zeta) & p_{2}'(0;\zeta)
	\end{pmatrix} = \begin{pmatrix}
	1 & 0\\
	0 & 1
	\end{pmatrix}. \label{eq:initialconditions}
	\end{align}
	We may write any solution $ p(z;\zeta) $ as a superposition of the fundamental solutions:
	\begin{align*}
	\begin{pmatrix}
	p(z;\zeta)\\
	p'(z;\zeta)
	\end{pmatrix} = \mathbb{F}(z;\zeta)\begin{pmatrix}
	C_{1}\\
	C_{2}
	\end{pmatrix}, \; C_{1}, C_{2} \in \C.
	\end{align*}
	Now if $ \zeta \in \sigma $, we can use \cref{eq:quasiperiodic} with $ z = 0 $:
	\begin{align*}
	\mathbb{F}(0;\zeta)\begin{pmatrix}
	C_{1}\\
	C_{2}
	\end{pmatrix} = e^{i\theta}\mathbb{F}(T;\zeta)\begin{pmatrix}
	C_{1}\\
	C_{2}
	\end{pmatrix},
	\end{align*}
	from which we have:
	\begin{align}
	\mathbb{F}(T;\zeta)\begin{pmatrix}
	C_{1}\\
	C_{2}
	\end{pmatrix} = e^{-i\theta}\begin{pmatrix}
	C_{1}\\
	C_{2}
	\end{pmatrix}. \label{eq:evaleq}
	\end{align}
	The matrix $ \mathbb{F}(T;\zeta) $ is called the \textit{monodromy matrix}, and its two eigenvalues $ \rho_{1}, \rho_{2} $ are referred to as \textit{Floquet multipliers} \cite{Jones14}. From \cref{eq:evaleq}, we identify $ \rho_{1} = e^{-i\theta} $, and we apply Abel's identity and the initial conditions in \cref{eq:initialconditions} to \cref{eq:lsg} to find that:
	\begin{align*}
	\det\left(\mathbb{F}(z;\zeta)\right) = \exp\left(\frac{2ic\zeta}{c^{2}-1}z\right).
	\end{align*}
	When $ z = T $, we have:
	\begin{align*}
	\rho_{1}\rho_{2} = \det\left(\mathbb{F}(T;\zeta)\right) = \exp\left(\frac{2icT\zeta}{c^{2}-1}\right),
	\end{align*}
	from which we conclude that:
	\begin{align}
	\rho_{2} = \exp\left(\frac{2icT\zeta}{c^{2}-1} + i\theta\right) . \label{eq:rho2}
	\end{align}
	We seek to make use of the results in \cite{Jones13} which connect the Floquet multipliers $ \rho_{1},\rho_{2} $ of \cref{eq:lsg} to \textit{Hill's equation} in \cref{eq:Hill}. Making the exponential transform \cite{Jones13}:
	\begin{align}
	q(z) = p(z)\exp\left(\frac{-ic\zeta}{c^{2} - 1}z\right), \label{eq:expTransform}
	\end{align}
	we transform \cref{eq:lsg} into the following form of Hill's equation:
	\begin{align}
	q'' + \left(\frac{\zeta^{2}}{(c^2-1)^{2}} + \frac{\cos(\hat{U})}{c^2-1}\right)q = 0 \label{eq:Hill}.
	\end{align}
	We define the principal fundamental solution matrix for \cref{eq:Hill} as:
	\begin{align}
	\mathbb{H}(z;\zeta) \coloneqq \begin{pmatrix}
	q_{1}(z;\zeta) & q_{2}(z;\zeta)\\
	q_{1}'(z;\zeta) & q_{2}'(z;\zeta)
	\end{pmatrix}, \quad \mathbb{H}(0;\zeta) = \begin{pmatrix}
	1 & 0\\
	0 & 1
	\end{pmatrix}. \label{eq:HillMonodromy}
	\end{align}
	We write $ \mathbb{H}(T;\zeta) $ for the monodromy matrix of \cref{eq:Hill}, and its two Floquet multipliers are denoted by $ \eta_{1}, \eta_{2} $. In \cite[Lemma 3.1]{Jones13}, Jones et al. prove that $ \rho_{1}, \rho_{2} $ are the Floquet multipliers of \cref{eq:lsg} if and only if 
	\begin{align}
	\eta_{1} = \exp{\frac{-ic\zeta T}{c^{2}-1}}\rho_{1},\; \eta_{2} = \exp{\frac{-ic\zeta T}{c^{2}-1}}\rho_{2} \label{eq:floquetmults}
	\end{align}
	are the Floquet multipliers of \cref{eq:Hill}. Consequently, the function:
	\begin{align}
	D_{2}(\zeta;\theta) \coloneqq \mathrm{tr}(\mathbb{H}(T;\zeta)) - (\eta_{1} + \eta_{2}) \label{eq:d2}
	\end{align}
	vanishes, at least to first order, at precisely the values $ \zeta = \zeta_{0} $ which are characteristic values of the linearised sine-Gordon equation for a given Floquet exponent $ \theta $. We can calculate $ \eta_{1} $ and $ \eta_{2} $ directly using \cref{eq:rho2,,eq:floquetmults}:
	\begin{align*}
	\eta_{1} = \exp\left(-\frac{ic\zeta T}{c^{2}-1} - i\theta\right), \; \eta_{2} = \exp\left(\frac{ic\zeta T}{c^{2}-1} + i\theta\right),
	\end{align*}
	which indeed obeys the condition that $ \eta_{1}\eta_{2} = \det(\mathbb{H}(z;\zeta)) = 1 $ by Abel's identity. \Cref{eq:d2} then simplifies to:
	\begin{align}
	D_{2}(\zeta;\theta) \coloneqq \mathrm{tr}(\mathbb{H}(T;\zeta)) - 2\cos\left(\frac{c\zeta T}{c^{2}-1} + \theta\right) \label{eq:newEvans}
	\end{align}
	We pause here to include a relevant result from \cite[Definition 3.5]{Jones14}. The authors provide an Evans function for quasi-periodic solutions $ p(z) = e^{i\theta}p(z+T) $ to the linearised sine-Gordon \cref{eq:lsg}:
	\begin{align}
	D_{1}(\zeta;\theta) \coloneqq \det(\mathbb{F}(T;\zeta) - e^{-i\theta}\mathbb{I}) \label{eq:D1}.
	\end{align}
	\begin{Theorem}
		The function
		\begin{align*}
		D_{2}(\zeta;\theta) \coloneqq \mathrm{tr}(\mathbb{H}(T;\zeta)) - 2\cos\left(\frac{c\zeta T}{c^{2}-1} + \theta\right) 
		\end{align*}
		is an Evans function for characteristic values of $ \mathcal{L}_{SG}(\zeta) $ defined in \cref{eq:lsgpencil} parametrised by the Floquet exponent $ \theta $.
	\end{Theorem}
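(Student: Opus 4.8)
The plan is to verify, for each fixed Floquet exponent $\theta$, the three properties that make $D_2(\,\cdot\,;\theta)$ an Evans function in the sense recalled in the Introduction: it is holomorphic on $\C$; its zeros are exactly the characteristic values of $\mathcal{L}_{SG}(\zeta)$ for that $\theta$; and the order of vanishing of $D_2$ at each such zero equals the algebraic multiplicity of the corresponding characteristic value.

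Holomorphy is immediate. The coefficients of Hill's equation \cref{eq:Hill} are polynomial in $\zeta$ and continuous (indeed $T$-periodic) in $z$, so by the standard theory of analytic dependence of linear ODE solutions on parameters, $\mathbb{H}(T;\zeta)$ is an entire matrix-valued function of $\zeta$; hence $\mathrm{tr}(\mathbb{H}(T;\zeta))$ is entire, and since $2\cos\!\left(\tfrac{c\zeta T}{c^2-1}+\theta\right)$ is entire, so is $D_2(\,\cdot\,;\theta)$. Because $\mathcal{L}_{SG}(\zeta)$ has compact resolvent its characteristic values are discrete, so $D_2(\,\cdot\,;\theta)\not\equiv 0$ and its zeros are isolated.

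For the zero set I would argue through the exponential transform \cref{eq:expTransform}; write $\alpha(\zeta):=\tfrac{c\zeta T}{c^2-1}$. A nontrivial $p$ solving \cref{eq:lsg} with $p(z)=e^{i\theta}p(z+T)$ exists if and only if, via \cref{eq:expTransform}, a nontrivial $q$ solves \cref{eq:Hill} with $q(z+T)=e^{-i(\theta+\alpha(\zeta_0))}q(z)$; by uniqueness of solutions to \cref{eq:Hill} and $T$-periodicity of its coefficients, this is equivalent to $e^{-i(\theta+\alpha(\zeta_0))}$ being an eigenvalue of $\mathbb{H}(T;\zeta_0)$ with eigenvector $(q(0),q'(0))^{\mathsf T}\neq 0$. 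Abel's identity gives $\det\mathbb{H}(T;\zeta_0)=1$, so the eigenvalues of $\mathbb{H}(T;\zeta_0)$ are the roots of $\eta^2-\mathrm{tr}(\mathbb{H}(T;\zeta_0))\eta+1=0$; hence $e^{-i(\theta+\alpha(\zeta_0))}$ is one of them if and only if $\mathrm{tr}(\mathbb{H}(T;\zeta_0))=e^{-i(\theta+\alpha(\zeta_0))}+e^{i(\theta+\alpha(\zeta_0))}=2\cos(\theta+\alpha(\zeta_0))$, i.e. $D_2(\zeta_0;\theta)=0$. Thus the zeros of $D_2(\,\cdot\,;\theta)$ are exactly the characteristic values of $\mathcal{L}_{SG}(\zeta)$ for exponent $\theta$.

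The remaining and most delicate point is matching the order of vanishing with the algebraic multiplicity, which I would handle by relating $D_2$ to the classical quasi-periodic Evans function $D_1$ of \cref{eq:D1}. The substitution \cref{eq:expTransform} conjugates the monodromy matrices: with $\beta=-ic\zeta/(c^2-1)$ and $M=\bigl(\begin{smallmatrix}1&0\\\beta&1\end{smallmatrix}\bigr)$ one has $\mathbb{H}(T;\zeta)=e^{\beta T}M\,\mathbb{F}(T;\zeta)\,M^{-1}$, whence $\mathrm{tr}\,\mathbb{H}(T;\zeta)=e^{-i\alpha(\zeta)}\mathrm{tr}\,\mathbb{F}(T;\zeta)$ and $\det\mathbb{F}(T;\zeta)=e^{2i\alpha(\zeta)}$; expanding $D_1=\det(\mathbb{F}(T;\zeta)-e^{-i\theta}\mathbb{I})=e^{-2i\theta}-e^{-i\theta}\mathrm{tr}\,\mathbb{F}(T;\zeta)+e^{2i\alpha(\zeta)}$ and comparing yields the identity $D_2(\zeta;\theta)=-e^{i(\theta-\alpha(\zeta))}D_1(\zeta;\theta)$. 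The prefactor being entire and nowhere zero, $D_2$ and $D_1$ have the same zeros with the same orders, and it remains to invoke the result of \cite{Jones14} (together with \cite{KM}) that $D_1$ vanishes at a characteristic value to exactly the order of its algebraic multiplicity. The main obstacle is the bookkeeping ensuring that this notion of algebraic multiplicity agrees with the eigenvalue-branch definition recalled in the Introduction — for $\zeta_0\in\R$ this is the statement that the summed orders of vanishing of the analytic eigenvalue branches $\mu_i(\zeta)$ of the self-adjoint pencil $\mathcal{L}_{SG}(\zeta)$ equal the order of vanishing of $D_1$, which I would justify via the holomorphic perturbation theory of \cite{Kato} as used in \cite{KM}, while for $\zeta_0\notin\R$ the algebraic multiplicity is the pencil-theoretic one and the determinant identity $D_2=-e^{i(\theta-\alpha)}D_1$ closes the argument. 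I would present only the elementary transform-and-Wronskian computations in detail and cite \cite{Jones14,KM} for this final identification.
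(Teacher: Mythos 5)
Your proposal is correct and follows essentially the same route as the paper: both establish entirety of $D_2$ and then reduce to the known Evans function $D_1$ of \cite{Jones14} via the exponential transform, arriving at the same identity (the paper writes it as $D_1(\zeta;\theta)=-\exp\bigl(\tfrac{ic\zeta T}{c^2-1}-i\theta\bigr)D_2(\zeta;\theta)$, which is exactly your $D_2=-e^{i(\theta-\alpha(\zeta))}D_1$). The only cosmetic differences are that you obtain the identity by conjugating the monodromy matrices and conclude equality of vanishing orders from the nowhere-vanishing entire prefactor, whereas the paper substitutes the transformed fundamental solutions into the expanded determinant and runs an explicit Leibniz-rule induction on derivatives.
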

	\begin{proof}
		Magnus and Winkler proved that $ \mathrm{tr}(\mathbb{H}(T;\zeta)) $ is an entire function of $ \zeta $ \cite[Theorem 2.2]{MW}, so $ D_{2}(\zeta;\theta) $ is itself an entire function of $ \zeta $. We now prove that $ D_{1}(\zeta;\theta) $ (defined in \cref{eq:D1}) and $ D_{2}(\zeta;\theta) $ vanish at the same values $ \zeta_{0} $ to precisely the same degree, thus qualifying $ D_{2}(\zeta;\theta) $ as an Evans function. We proceed by rewriting $ D_{1}(\zeta;\theta) $ and $ D_{2}(\zeta;\theta) $ in terms of fundamental solutions in \cref{eq:pfsmLSG} and \cref{eq:HillMonodromy}:
		\begin{align}
		D_{1}(\zeta;\theta) &= \det\begin{pmatrix}p_{1}(T;\zeta) - e^{-i\theta} & p_{2}(T;\zeta)\\
		p_{1}'(T;\zeta) & p_{2}'(T;\zeta) - e^{-i\theta}\\
		\end{pmatrix},\\
		&= \exp\left(\frac{2icT\zeta}{c^{2}-1}\right) - e^{-i\theta}(p_{1}(T;\zeta) + p_{2}'(T;\zeta)) + e^{-2i\theta}. \label{eq:D1Evans}
		\end{align}
		Similarly:
		\begin{align*}
		D_{2}(\zeta;\theta) = q_{1}(T;\zeta) + q_{2}'(T;\zeta) - 2\cos\left(\frac{c\zeta T}{c^{2}-1} + \theta\right).
		\end{align*}
		We define $ r_{1}(z;\zeta) $ and $ r_{2}(z;\zeta) $ by using the exponential transform in \cref{eq:expTransform} applied to $ p_{1} $ and $ p_{2} $:
		\begin{align*}
		r_{1}(z;\zeta) &\coloneqq p_{1}(z;\zeta)\exp\left(\frac{-ic\zeta}{c^{2}-1}z\right),\\
		r_{2}(z;\zeta) &\coloneqq p_{2}(z;\zeta)\exp\left(\frac{-ic\zeta}{c^{2}-1}z\right),
		\end{align*}
		and we note that $ r_{1}, r_{2} $ are solutions of \cref{eq:Hill}. We use the initial conditions of $ p_{1}, p_{2} $ to express $ r_{1}, r_{2} $ in the basis of fundamental solutions $ q_{1}, q_{2} $, which yields:
		\begin{align*}
		r_{1}(z;\zeta) &= q_{1}(z;\zeta) - \frac{ic\zeta}{c^{2}-1}q_{2}(z;\zeta)\\
		r_{2}(z;\zeta) &= q_{2}(z;\zeta).
		\end{align*}
		We can now write $ p_{1}, p_{2} $ in terms of $ q_{1}, q_{2} $:
		\begin{align*}
		p_{1}(z;\zeta) &= \exp\left(\frac{ic\zeta}{c^{2}-1}z\right)\left(q_{1}(z;\zeta) - \frac{ic\zeta}{c^{2}-1}q_{2}(z;\zeta)\right)\\
		p_{2}(z;\zeta) &= \exp\left(\frac{ic\zeta}{c^{2}-1}z\right)q_{2}(z;\zeta).
		\end{align*}
		Finally, we substitute these expressions into \cref{eq:D1Evans}, which yields:
		\begin{align}
		D_{1}(\zeta;\theta) = -\exp\left(\frac{ic\zeta T}{c^{2}-1} - i\theta\right)D_{2}(\zeta;\theta). \label{eq:EvansFunctionsEquation}
		\end{align}
		It follows that $ D_{1}(\zeta;\theta) = 0 \iff D_{2}(\zeta;\theta) = 0 $. Suppose that $ \zeta_{0} $ is a zero of $ D_{1}(\zeta;\theta) $ with degree of vanishing $ n $:
		\begin{align*}
		D_{1}(\zeta_{0};\theta) = D_{1}'(\zeta_{0};\theta) = \dots = D_{1}^{(n-1)}(\zeta_{0};\theta) = 0, \quad D_{1}^{(n)}(\zeta_{0};\theta) \neq 0.
		\end{align*}
		For some $ j \leq n $, with base case $ j = 1 $, we assume inductively that:
		\begin{align}
		D_{2}(\zeta_{0};\theta) = D_{2}'(\zeta_{0};\theta) = \dots = D_{2}^{(j-1)}(\zeta_{0};\theta) = 0. \label{eq:inductivehypothesis}
		\end{align}
		We apply the product rule to \cref{eq:EvansFunctionsEquation} $ j  $ times, which yields:
		\begin{align*}
		D_{1}^{(j)}(\zeta_{0};\theta) = -\exp\left (\frac{icT\zeta_{0}}{c^2-1}-i\theta\right )\sum_{k = 0}^{j}{j \choose k}\left (\frac{icT}{c^2-1}\right )^{j-k}D_{2}^{(k)}(\zeta_{0};\theta).
		\end{align*}
		Using the inductive hypothesis in \cref{eq:inductivehypothesis}, we have:
		\begin{align*}
		D_{1}^{(j)}(\zeta_{0};\theta) = -\exp\left (\frac{icT\zeta_{0}}{c^2-1}-i\theta\right )D_{2}^{(j)}(\zeta_{0};\theta),
		\end{align*}
		and hence $ D_{1}^{(j)}(\zeta_{0};\theta) = 0 \implies D_{2}^{(j)}(\zeta_{0};\theta) = 0 $. By induction, this is true for $ j = 1, \dots, n - 1 $, and we check that:
		\begin{align*}
		D_{1}^{(n)}(\zeta_{0};\theta) = -\exp\left (\frac{icT\zeta_{0}}{c^2-1}-i\theta\right )D_{2}^{(n)}(\zeta_{0};\theta),
		\end{align*}
		meaning that $ D_{1}^{(n)}(\zeta_{0};\theta) \neq 0 \implies D_{2}^{(n)}(\zeta_{0};\theta) \neq 0 $. Given $ \zeta_{0} $ a zero of order $ n $ for $ D_{1}(\zeta;\theta) $, then $ \zeta_{0} $ is also a zero of order $ n $ for $ D_{2}(\zeta;\theta) $. The zeros of $ D_{1}(\zeta;\theta) $ are the only zeros of $ D_{2}(\zeta;\theta) $, which is proved by rearranging \cref{eq:EvansFunctionsEquation} to:
		\begin{align*}
		D_{2}(\zeta;\theta) = - \exp\left(-\frac{ic\zeta T}{c^{2}-1} + i\theta\right)D_{1}(\zeta;\theta)
		\end{align*}
		and following the same proof by induction as above. Thus $ D_{2}(\zeta;\theta) $ is an Evans function for quasi-periodic solutions of the linearised sine-Gordon equation.
	\end{proof}
	\begin{corollary}
		The function
		\begin{align*}
		D_{3}(\zeta;\theta) = \det\left(\mathbb{H}(T;\zeta) - \exp\left(-\frac{ic\zeta T}{c^{2}-1} - i\theta\right)\mathbb{I}\right)
		\end{align*}
		is also an Evans function for quasi-periodic solutions of the linearised sine-Gordon equation.
	\end{corollary}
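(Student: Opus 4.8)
The plan is to show that $D_{3}$ coincides, up to multiplication by a nowhere-vanishing entire factor, with the Evans function $D_{2}$ established in the Theorem above; the corollary then follows immediately. Note first that the scalar $\exp\left(-\frac{ic\zeta T}{c^{2}-1}-i\theta\right)$ appearing in $D_{3}$ is precisely the Floquet multiplier $\eta_{1}$ of Hill's \cref{eq:Hill} computed in the text, so $D_{3}$ is the natural Hill's-equation analogue of $D_{1} = \det(\mathbb{F}(T;\zeta) - e^{-i\theta}\mathbb{I}) = \det(\mathbb{F}(T;\zeta) - \rho_{1}\mathbb{I})$. I would expand the $2\times 2$ determinant using $\det(A - \mu\mathbb{I}) = \mu^{2} - \mu\,\mathrm{tr}(A) + \det(A)$ with $A = \mathbb{H}(T;\zeta)$ and $\mu = \exp\left(-\frac{ic\zeta T}{c^{2}-1} - i\theta\right)$. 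Abel's identity applied to Hill's \cref{eq:Hill}, which has no first-order term, gives $\det(\mathbb{H}(T;\zeta)) = \eta_{1}\eta_{2} = 1$, so that $D_{3}(\zeta;\theta) = \mu^{2} - \mu\,\mathrm{tr}(\mathbb{H}(T;\zeta)) + 1$.

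Next I would factor out $\mu$. Since $\mu^{-1} = \exp\left(\frac{ic\zeta T}{c^{2}-1} + i\theta\right)$, we have $\mu + \mu^{-1} = 2\cos\left(\frac{c\zeta T}{c^{2}-1} + \theta\right)$ as an identity of entire functions of $\zeta$ (this is the series definition of cosine, valid for complex arguments and not merely for real $\zeta$). Therefore
\[
D_{3}(\zeta;\theta) = \mu\left(\mu + \mu^{-1} - \mathrm{tr}(\mathbb{H}(T;\zeta))\right) = -\exp\left(-\frac{ic\zeta T}{c^{2}-1} - i\theta\right) D_{2}(\zeta;\theta).
\]

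Finally, I would invoke the Theorem. The trace $\mathrm{tr}(\mathbb{H}(T;\zeta))$ is entire in $\zeta$ by \cite[Theorem 2.2]{MW}, and $\mu$ is entire and nowhere zero, so $D_{3}$ is entire and its zero set coincides exactly with that of $D_{2}$, with the same order of vanishing at each root: multiplication by a holomorphic function that is nonzero at $\zeta_{0}$ preserves $\mathrm{ord}_{\zeta_{0}}$, or equivalently one can repeat verbatim the product-rule induction used in the proof of the Theorem. Since $D_{2}$ is an Evans function for quasi-periodic solutions of the linearised sine-Gordon equation, so is $D_{3}$. There is no substantial obstacle here; the only point meriting attention is that $\mu$ and $\mu^{-1}$ are reciprocals rather than complex conjugates, so that the simplification to $2\cos\left(\frac{c\zeta T}{c^{2}-1} + \theta\right)$ is a genuine identity of entire functions even when $\zeta \notin \R$.
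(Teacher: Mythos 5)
Your proposal is correct and follows essentially the same route as the paper: expand the $2\times 2$ determinant, use $\det(\mathbb{H}(T;\zeta))=1$ from Abel's identity, and factor out the nowhere-vanishing exponential to obtain $D_{3}(\zeta;\theta) = -\exp\left(-\frac{ic\zeta T}{c^{2}-1} - i\theta\right)D_{2}(\zeta;\theta)$. The paper's proof is exactly this computation (leaving the preservation of zero orders implicit), so your additional remarks on entirety and order of vanishing are a harmless elaboration rather than a different argument.
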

	\begin{proof}
		Upon expansion we have:
		\begin{align*}
		D_{3}(\zeta;\theta) &= \det(\mathbb{H}(T;\zeta)) - \exp\left(-\frac{ic\zeta T}{c^{2}-1} - i\theta\right)\mathrm{tr}(\mathbb{H}(T;\zeta)) + \exp\left(-\frac{2ic\zeta T}{c^{2}-1} - 2i\theta\right)\\
		&= - \exp\left(-\frac{ic\zeta T}{c^{2}-1} - i\theta\right)\left(\mathrm{tr}(\mathbb{H}(T;\zeta)) - \exp\left(\frac{ic\zeta T}{c^{2}-1} + i\theta\right) - \exp\left(-\frac{ic\zeta T}{c^{2}-1} - i\theta\right)\right)\\
		&= -\exp\left(-\frac{ic\zeta T}{c^{2}-1} - i\theta\right)D_{2}(\zeta;\theta).
		\end{align*}
	\end{proof}
	We can use these new Evans functions to compute the spectrum of the linearised sine-Gordon equation using only the solutions of Hill's \cref{eq:Hill}. In \cref{fig:f1a,fig:f1b,fig:f1d} we reproduce the results of \cite{Jones13}, while \cref{fig:f1c} is a reproduction of a result in \cite{Marangell2015}. These diagrams are produced by substituting complex values of $ \zeta $ into the Evans function $ D_{2}(\zeta;\theta) $ and applying a root-finder to find the values of $ \theta $ at which $ D_{2}(\zeta;\theta) $ vanishes. Our diagrams are the same as those in the referenced papers.\par
	\begin{figure}[t]
		\centering
		\begin{subfigure}[t]{0.4 \textwidth}
			\captionsetup{justification=centering,margin=1cm}
			\centering
			\includegraphics[width=\linewidth]{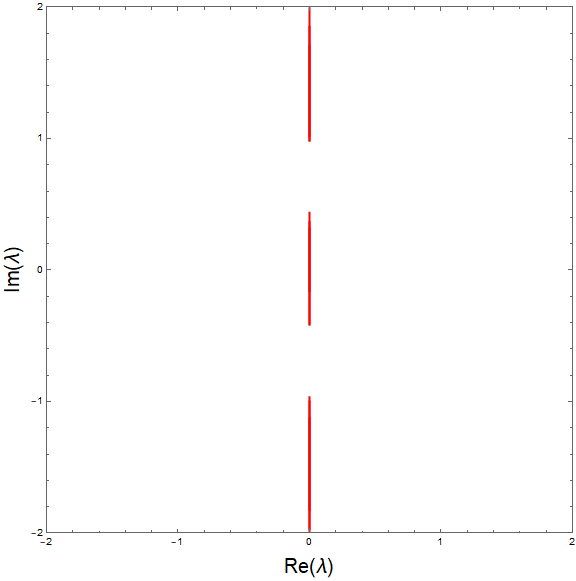}
			\caption{Subluminal rotational. $E = -0.5,\; c = 0.5 $.}
			\label{fig:f1a}
		\end{subfigure}
		\begin{subfigure}[t]{0.4 \textwidth}
			\captionsetup{justification=centering,margin=1cm}
			\centering
			\includegraphics[width=\linewidth]{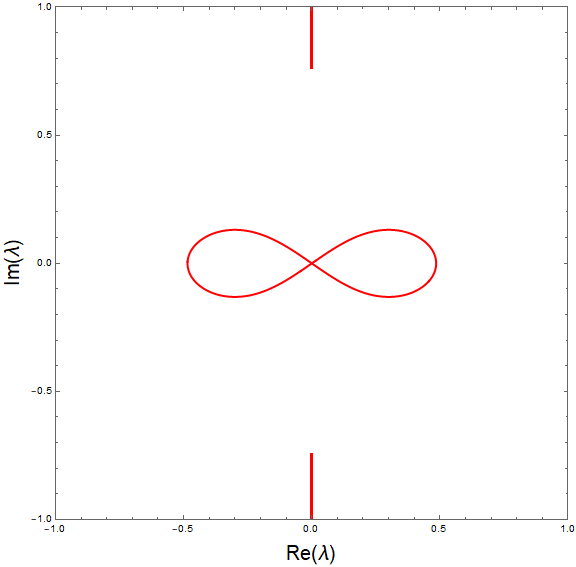}
			\caption{Subluminal librational. $E = 0.5,\; c = 0.5 $.}
			\label{fig:f1b}
		\end{subfigure}
		\begin{subfigure}[t]{0.4 \textwidth}
			\captionsetup{justification=centering,margin=1cm}
			\centering
			\includegraphics[width=\linewidth]{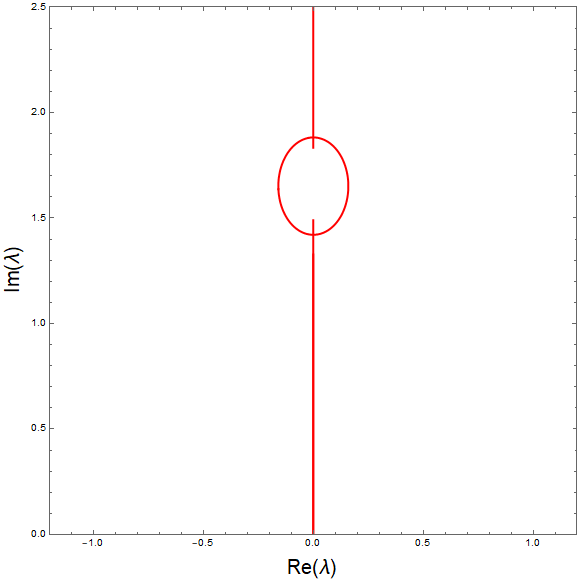}
			\caption{Upper halfplane. Superluminal rotational. $E = 6,\; c = 1.45 $.}
			\label{fig:f1c}
		\end{subfigure}
		\begin{subfigure}[t]{0.4 \textwidth}
			\captionsetup{justification=centering,margin=1cm}
			\centering
			\includegraphics[width=\linewidth]{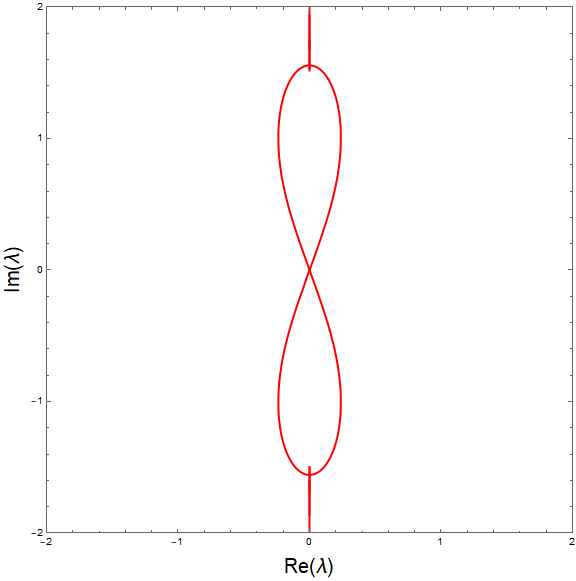}
			\caption{Superluminal librational. $E = 1.5,\; c = 2 $.}
			\label{fig:f1d}
		\end{subfigure}
		\caption{Numerical plots of the spectra $ \sigma $ of various periodic travelling wave solutions to the sine-Gordon \cref{eq:sG}, calculated by finding solutions to the Evans function $ D_{2}(\zeta;\theta) $  with $ \zeta \in \C $ and for all $ \theta \in [0, 2\pi) $. Recall that $ \lambda = i\zeta $. }
		\label{fig:f1}
	\end{figure}
	\subsection{Krein signatures of the linearised sine-Gordon equation}
	Making the restriction that $ \zeta \in \R $, then $ \mathcal{L}_{SG}(\zeta) $ is a self-adjoint, holomorphic family of type (A) with compact resolvent \cite[Example 11]{KM}, meaning that its characteristic values have well-defined Krein signatures. The Evans function $ D_{2}(\zeta;\theta) $ can be turned into a so-called \textit{Evans-Krein function}, used in calculating the Krein signatures of isolated characteristic values $ \zeta_{0} $. Following Kollar and Miller in \cite[Theorem 4.2]{KM}, an Evans-Krein function $ E(\lambda;\mu) $ for an operator pencil $ \mathcal{L}(\lambda) $ is an Evans function for the $ \mu $-parametrised pencil:
	\begin{align*}
	\mathcal{K}(\lambda;\mu) \coloneqq \mathcal{L}(\lambda) - \mu\mathbb{I}.
	\end{align*}
	In \cite[\S4.4]{KM}, Kollar and Miller prove for an isolated, simple characteristic value $ \lambda_{0} $ that:
	\begin{align}
	\mu'(\lambda_{0}) = -\frac{E_{\lambda}(\lambda_{0};0)}{E_{\mu}(\lambda_{0};0)}. \label{eq:muderivative}
	\end{align}
	\Cref{eq:muderivative} allows us to calculate the Krein signature via \cref{eq:KreinSig}. In our case, we consider the related pencil
	\begin{align*}
	\mathcal{K}_{SG}(\zeta) \coloneqq \mathcal{L}_{SG}(\zeta) - \mu\mathbb{I},
	\end{align*}
	which has Evans-Krein function:
	\begin{align}
	E_{SG}(\zeta;\mu) = \mathrm{tr}(\mathbb{H}(T;\zeta, \mu)) - 2\cos\left(\frac{c\zeta T}{c^{2}-1} + \theta\right). \label{eq:lsgEvansKrein}
	\end{align}
	The monodromy matrix $ \mathbb{H}(T;\zeta,\mu) $ is defined as in \cref{eq:HillMonodromy}, however the related Hill equation is:
	\begin{align}
	\left(\frac{\zeta^{2}}{(c^{2}-1)^{2}} - \mu\right)q + \left(\partial_{z}^{2} + \frac{\cos(\hat{U})}{c^{2} - 1}\right)q = 0. \label{eq:HillRelated}
	\end{align}
	We can use the substitution $ v(\zeta, \mu) = \frac{\zeta^{2}}{(c^{2} - 1)^{2}} - \mu $ to calculate the partial derivatives of $ E_{SG} $:
	\begin{align*}
	\frac{\partial}{\partial \zeta}E_{SG}(\zeta;\mu) &= \frac{\partial v}{\partial \zeta}\frac{\partial}{\partial v}\mathrm{tr}(\mathbb{H}(T;\zeta, \mu)) + \frac{2cT}{c^{2} - 1}\sin\left(\frac{c\zeta T}{c^{2}-1} + \theta\right)\\
	&= \frac{2\zeta}{(c^{2} - 1)^{2}}\frac{\partial}{\partial v}\mathrm{tr}(\mathbb{H}(T;\zeta, \mu)) + \frac{2cT}{c^{2} - 1}\sin\left(\frac{c\zeta T}{c^{2}-1} + \theta\right)\\
	\frac{\partial}{\partial \mu}E_{SG}(\zeta;\mu) &= \frac{\partial v}{\partial \mu}\frac{\partial}{\partial v}\mathrm{tr}(\mathbb{H}(T;\zeta, \mu))\\
	&= -\frac{\partial}{\partial v}\mathrm{tr}(\mathbb{H}(T;\zeta, \mu)).
	\end{align*}
	Using these results with \cref{eq:muderivative} we have:
	\begin{align}
	\mu'(\zeta_{0}) &= \frac{2\zeta_{0}}{(c^{2} - 1)^{2}} +  \frac{\frac{2cT}{c^{2} - 1}\sin\left(\frac{c\zeta_{0} T}{c^{2}-1} + \theta\right)}{\frac{\partial}{\partial v}\mathrm{tr}(\mathbb{H}(T;\zeta_{0}, 0))}. \label{eq:muVderiv}
	\end{align}
	We have from \cite[Corollary 2.1, Theorem 2.2]{MW} that
	\begin{align*}
	D_{Hill}(\zeta) \coloneqq \mathrm{tr}(\mathbb{H}(T;\zeta_{0})) - 2
	\end{align*}
	is an Evans function for characteristic values of periodic solutions to Hill's \cref{eq:Hill}. So we have:
	\begin{align*}
	D_{Hill}'(\zeta_{0}) &= \frac{\partial v}{\partial \zeta}\frac{\partial}{\partial v}\mathrm{tr}(\mathbb{H}(T;\zeta_{0}, 0))\\
	\implies \frac{\partial}{\partial v}\mathrm{tr}(\mathbb{H}(T;\zeta_{0}, 0)) &= \frac{(c^{2} - 1)^{2}}{2\zeta_{0}}D_{Hill}'(\zeta_{0}).
	\end{align*}
	We make this substitution in \cref{eq:muVderiv} because it is easier numerically to compute $ D_{Hill}'(\zeta_{0}) $. Hence we have:
	\begin{align}
	\mu'(\zeta_{0}) = \frac{2\zeta_{0}}{(c^{2} - 1)^{2}} +\frac{4c\zeta_{0}T}{(c^{2} - 1)^{3}} \frac{\sin\left(\frac{c\zeta_{0} T}{c^{2}-1} + \theta\right)}{D_{Hill}'(\zeta_{0})}. \label{eq:mudash}
	\end{align}
	Finally, using \cref{eq:KreinSig}, we compute the Krein signature of simple characteristic values for quasi-periodic solutions of the linearised sine-Gordon \cref{eq:lsg}:
	\begin{align}
	\kappa(\zeta_{0}) = \mathrm{sign}\left(\frac{2\zeta_{0}}{(c^{2} - 1)^{2}} +\frac{4c\zeta_{0}T}{(c^{2} - 1)^{3}} \frac{\sin\left(\frac{c\zeta_{0} T}{c^{2}-1} + \theta\right)}{D_{Hill}'(\zeta_{0})}\right). \label{eq:kreinsg}
	\end{align}
	We pause to note that the situations where $ \zeta_{0} = 0 $ or $ D_{Hill}'(\zeta_{0}) = 0 $ pose problems with the above derivation. Firstly, $ \zeta_{0} = 0 $ is never a simple characteristic value of the linearised sine-Gordon equation. This is a natural consequence of the system being a second degree autonomous Hamiltonian. For the purposes of completeness, we sketch a proof of this fact. We observe that:
	\begin{align*}
	\mathcal{L}_{SG}(0)\hat{U}'(z) &= \hat{U}_{zzz} + \frac{\cos(\hat{U})}{c^{2}-1}\hat{U}_{z}\\
	&= 0,
	\end{align*}
	where the last step follows by differentiating \cref{eq:standwavesg}:
	\begin{align*}
	(c^2 - 1)\hat{U}_{zz}  + \sin(\hat{U}) = 0.
	\end{align*} 
	The function $ \hat{U}'(z) $ is $ T $-periodic, and so it has Floquet exponent $ \theta = 0 $. Noting that $ \mathrm{tr}(\mathbb{H}(T;\zeta)) $ is analytic (by \cite[Theorem 2.2]{MW}) and even in $ \zeta $ (since Hill's \cref{eq:Hill} is only dependent on $ \zeta^{2} $), then 
	\begin{align*}
	D_{2}(\zeta;0)=\mathrm{tr}(\mathbb{H}(T;\zeta)) - 2\cos\left(\frac{c\zeta T}{c^{2}-1}\right)
	\end{align*}
	is analytic and even in $ \zeta $. Hence $ D_{2}'(0;0) = 0 $, and so the multiplicity of $ \zeta_{0} = 0 $ is at least 2. Moreover, $ \theta = 0 $ is the unique Floquet exponent of $ \zeta_{0} = 0 $, since $ D_{2}(0;0) = 0 $ implies that $ \mathrm{tr}(\mathbb{H}(T;0)) = 2 $, and hence 
	\begin{align*}
	D_{2}(0;\theta) = 2 - 2\cos\left(\theta\right) = 0 \iff \theta = 0
	\end{align*}
	with $ \theta \in [0,2\pi) $. An alternative proof that $ \zeta_{0} = 0 $ is a characteristic value with even multiplicity for the more general linearised nonlinear Klein-Gordon equation is in \cite[Lemma 6.2]{Jones14}. As for the values $ x_{0} $ when 
	\begin{align*}
	D_{Hill}'(x_{0}) =  \frac{\partial}{\partial \zeta}\left(\mathrm{tr}(\mathbb{H}(T;\zeta))\right)\big \lvert_{\zeta = x_{0}} = 0,
	\end{align*}
	we refer to Magnus and Winkler's oscillation theorem \cite[Theorem 2.1]{MW}, which states that
	\begin{align*}
	D_{Hill}'(x_{0}) = 0 \iff \lvert \mathrm{tr}(\mathbb{H}(T;x_{0}))\rvert \geq 2.
	\end{align*}
	If $ \lvert \mathrm{tr}(\mathbb{H}(T;x_{0}))\rvert > 2 $, then $ \lvert D_{2}(x_{0};\theta)\rvert > 0 $, so $ x_{0} $ is not a characteristic value. If:
	\begin{align*}
	\mathrm{tr}(\mathbb{H}(T;x_{0})) = (-1)^{j}2
	\end{align*}
	for $ j = 0,1 $, then there exists
	\begin{align*}
	\theta_{0} = j\pi-\frac{cx_{0}T}{c^{2} - 1} \; (\mathrm{mod}\; 2\pi)
	\end{align*}
	such that $ D_{2}(x_{0};\theta_{0}) = D_{2}'(x_{0};\theta_{0}) = 0 $, meaning that $ x_{0} $ has multiplicty greater than 1. Concretely, our formula in \cref{eq:kreinsg} is able to calculate the Krein signature of all simple characteristic values of the linearised sine-Gordon \cref{eq:lsg}. These results are immediately generalisable to the Klein-Gordon case, where $ \cos(\hat{U}) $ is replaced with $ V''(\hat{U}) $, and we will use these results freely in \cref{sec:nlKG}.\par
	The advantage of using $ D_{2}(\zeta;\theta) $ over $ D_{1}(\zeta;\theta) $ when computing Krein signatures is that we did not have to explicitly compute a partial derivative of an Evans-Krein function with respect to $ \mu $. Calculating $ \mu $-derivatives must be done numerically and involves solving for a set of fundamental solutions at several values of $ \mu $ for each simple characteristic value $ \zeta_{0} $. However, our Evans function bypasses this computational overhead since we can make the subsitution $ v(\zeta,\mu) = \frac{\zeta^{2}}{(c^{2}-1)^{2}} - \mu $ in the Evans-Krein function, which takes advantage of the dependence of \cref{eq:HillRelated} on $ \frac{\zeta^{2}}{(c^{2}-1)^{2}} - \mu $. Such a substitution is not possible when computing Krein signatures using $ D_{1}(\zeta;\theta) $, since this function is written in terms of the fundamental solutions to the linearised sine-Gordon \cref{eq:lsg}. For the sake of exposition, we can adapt $ D_{1}(\zeta;\theta) $ into an Evans-Krein function $ E_{1}(\zeta;\mu,\theta) $:
	\begin{align*}
	E_{1}(\zeta;\mu,\theta) &= \det(\mathbb{F}(T;\zeta,\mu) - e^{-i\theta}\mathbb{I})\\
	&= \exp\left(\frac{2icT\zeta}{c^{2}-1}\right) - e^{-i\theta}(\mathrm{tr}(\mathbb{F}(T;\zeta,\mu))) + e^{-2i\theta},
	\end{align*}
	where we have expanded $ E_{1}(\zeta;\mu,\theta) $ as in \cref{eq:D1Evans}. The monodromy matrix $ \mathbb{F}(T;\zeta,\mu) $ is made up of fundamental solutions to the related linearised sine-Gordon equation:
	\begin{align}
	\left(-\frac{\zeta^2}{c^2-1} - \mu\right)p - \frac{2ic \zeta}{c^2-1}\partial_{z} p + \left(\partial_{z}^{2} + \frac{\cos(\hat{U})}{c^2-1}\right)p = 0. \label{eq:Relatedlsg}
	\end{align}
	Since a substition for $ \zeta $ and $ \mu $ is not possible in \cref{eq:Relatedlsg}, we rely on direct computation of the derivatives of $ E_{1}(\zeta;\mu,\theta) $:
	\begin{align*}
	\frac{\partial}{\partial \zeta}E_{1}(\zeta;\mu,\theta) &= \frac{2icT}{c^{2}-1}\exp\left(\frac{2icT\zeta}{c^{2}-1}\right) - e^{-i\theta}\frac{\partial}{\partial \zeta}\left(\mathrm{tr}\left(\mathbb{F}(T;\zeta,\mu)\right)\right)\\
	\frac{\partial}{\partial \mu}E_{1}(\zeta;\mu,\theta) &= - e^{-i\theta}\frac{\partial}{\partial \mu}\left(\mathrm{tr}\left(\mathbb{F}(T;\zeta,\mu)\right)\right).
	\end{align*}
	Using \cref{eq:muderivative}, we now have:
	\begin{align*}
	\mu'(\zeta_{0}) = -\frac{\frac{2icT}{c^{2}-1}\exp\left(\frac{2icT\zeta_{0}}{c^{2}-1} + i\theta\right)}{\frac{\partial}{\partial \mu}\left(\mathrm{tr}\left(\mathbb{F}(T;\zeta_{0},0)\right)\right)} + \frac{\frac{\partial}{\partial \zeta}\left(\mathrm{tr}\left(\mathbb{F}(T;\zeta_{0},0)\right)\right)}{\frac{\partial}{\partial \mu}\left(\mathrm{tr}\left(\mathbb{F}(T;\zeta_{0},0)\right)\right)}.
	\end{align*}
	Using $ E_{1}(\zeta;\mu,\theta) $ to compute $ \mu'(\zeta_{0}) $, we must numerically differentiate
	\begin{align*}
	\mathrm{tr}\left(\mathbb{F}(T;\zeta,\mu)\right) = p_{1}(T;\zeta,\mu) + p_{2}'(T;\zeta,\mu)
	\end{align*}
	at $ (\zeta,\mu) = (\zeta_{0},0) $ with respect to both $ \zeta $ and $ \mu $. For a first order approximation of the $\mu$-derivative, we need to solve \cref{eq:Relatedlsg} for some $ \mu \ll 1 $ at each characteristic value $ \zeta_{0} $ of interest. Higher order approximations will require solving \cref{eq:Relatedlsg} for several values of $ \mu $ at each $ \zeta_{0} $, however \cref{eq:mudash} shows that we can find $ \mu'(\zeta_{0}) $ in terms of only a $ \zeta $-derivative of $ D_{Hill}(\zeta) = q_{1}(T;\zeta) + q_{2}'(T;\zeta) - 2 $ at $ \zeta = \zeta_{0} $. Hence, using our new Evans function $ D_{2}(\zeta;\theta) $ rather than $ D_{1}(\zeta;\theta) $ results in a more elegant calculation of Krein signatures. \par
	\begin{figure}[t]
		\centering
		\begin{subfigure}[t]{0.49 \textwidth}
			\captionsetup{justification=centering,margin=1cm}
			\centering
			\includegraphics[width=\linewidth]{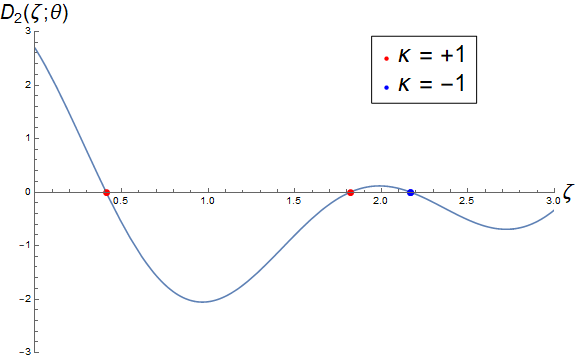}
			\caption{$ \theta = 4.36 $.}
			\label{fig:f2a}
		\end{subfigure}
		\begin{subfigure}[t]{0.49 \textwidth}
			\captionsetup{justification=centering,margin=1cm}
			\centering
			\includegraphics[width=\linewidth]{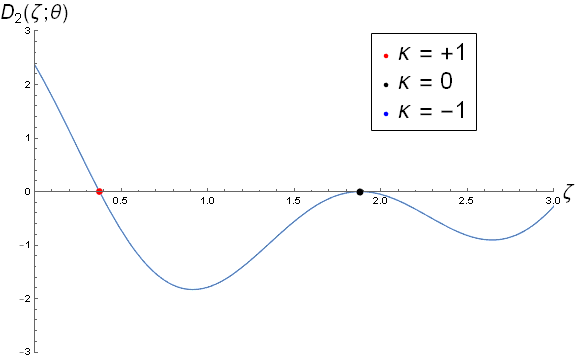}
			\caption{$ \theta = 4.525 $.}
			\label{fig:f2b}
		\end{subfigure}
		\begin{subfigure}[t]{0.49 \textwidth}
			\captionsetup{justification=centering,margin=1cm}
			\centering
			\includegraphics[width=\linewidth]{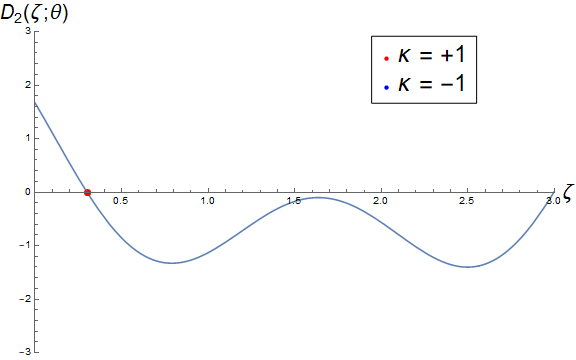}
			\caption{$ \theta = 4.88 $.}
			\label{fig:f2c}
		\end{subfigure}
		\begin{subfigure}[t]{0.49 \textwidth}
			\captionsetup{justification=centering,margin=1cm}
			\centering
			\includegraphics[width=\linewidth]{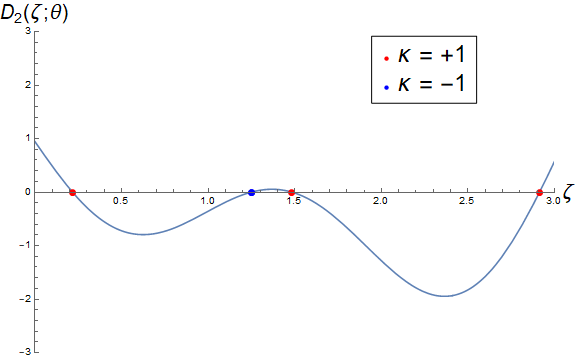}
			\caption{$ \theta = 5.27 $.}
			\label{fig:f2d}
		\end{subfigure}
		\caption{Numerical plots of $ D_{2}(\zeta;\theta) $ showing the Krein signatures of isolated characteristic values of the linearised sine-Gordon \cref{eq:lsg}, linearised around the superluminal rotational wave in \cref{fig:f1c} with $ E = 6, c = 1.45 $. A collision of opposite Krein signatures results in a Hamiltonian-Hopf bifurcation. Krein signatures with $ \kappa = 1 $ are shown online with red dots, while Krein signatures with $ \kappa = -1 $ are denoted by blue dots. \Cref{fig:f2b} shows the bifurcation point which corresponds to a Krein signature of $ \kappa = 0 $, denoted by a black dot.}
		\label{fig:f2}
	\end{figure}
	In \cref{fig:f2} we use \cref{eq:kreinsg} to numerically calculate the Krein signatures of isolated characteristic values in the case of the superluminal rotational wave in \cref{fig:f1c}. As the bifurcation parameter $ \theta $ is varied, we observe a collision between two characteristic values of opposite Krein signature, resulting in a Hamiltonian-Hopf bifurcation at $ \theta \approx 4.53 $ where these two characteristic values enter the complex plane. The two characteristic values then bifurcate back onto the imaginary axis at $ \theta \approx 5.16 $.
	\section{The nonlinear Klein-Gordon equation} \label{sec:nlKG}
	We now consider the more general nonlinear Klein-Gordon equation:
	\begin{align}
	u_{tt} - u_{xx} + V'(u) = 0, \label{eq:nlKG}
	\end{align}
	where $ u(x,t) \map{\R \times [0,+\infty)}{\R} $ and $ V(u)\map{\R}{\R} $ is a $ C^{2} $ potential. It is possible to recover the sine-Gordon \cref{eq:sG} by setting $ V(u) = 1 - \cos(u) $. Similar to our derivation of the linearised sine-Gordon equation, we have the linearised nonlinear Klein-Gordon equation:
	\begin{align}
	p'' - \frac{2ic \zeta}{c^2-1} p' + \left(-\frac{\zeta^2}{c^2-1} + \frac{V''(\hat{U})}{c^2-1}\right)p = 0, \label{eq:lnlkg}
	\end{align}
	where $ \hat{U}(z) $ satisfies
	\begin{align}
	(c^2 - 1)\hat{U}_{zz}  + V'(\hat{U}) = 0, \label{eq:standwavenlkg}
	\end{align}
	and has period $ T $. The case when $ V(u) $ is a periodic function of $ u $ has been studied extensively and we point the reader to \cite{Jones14} for a thorough analysis. Provided that $ \hat{U}(z) $ is periodic, then $ V(\hat{U}) $ is also periodic, making available the theory of \cite{Jones14} with the caveat that rotational waves will not be observed when $ V $ is not periodic. In fact, all the results of \cref{sec:s2} are immediately generalisable to any $ V \in C^{2} $, with the related Hill's \cref{eq:Hill} becoming:
	\begin{align}
	q'' + \left(\frac{\zeta^{2}}{(c^2-1)^{2}} + \frac{V''(\hat{U})}{c^2-1}\right)q = 0, \label{eq:nlkgHill}
	\end{align}
	and the Evans functions $ D_{2}(\zeta;\theta) $ and $ D_{3}(\zeta;\theta) $ remaining unchanged. The spectrum of the nonlinear Klein-Gordon equation exhibits Hamiltonian symmetry \cite[Proposition 3.9]{Jones14}. In particular, if $ p(z) $ satisfies \cref{eq:lnlkg} for $ \zeta \in \sigma $, then taking complex conjugates implies that $ p^{*}(z) $ satisfies \cref{eq:lnlkg} for $ \zeta^{*} $, and making the transformation 
	\begin{align*}
	p(z) = e^{\frac{-2ic\zeta}{c^{2} - 1}z}r(z)
	\end{align*}
	implies that $ r(z) $ satisfies \cref{eq:lnlkg} for $ -\zeta $.\par
	As a point of contrast to the sine-Gordon potential $ V(u) = 1 - \cos(u) $, we have chosen to consider the non-periodic potential $ V(u) = \frac{1}{4}u^{4} - \frac{1}{2}u^{2} $. The nonlinear Klein-Gordon equation with this non-periodic potential is known as the $ \phi^{4} $-model; \cite{Pal2020} includes an analysis of the spectral and orbital stability of subluminal periodic wavetrains. As before, we integrate \cref{eq:standwavenlkg} once which introduces the energy parameter $ E $:
	\begin{align*}
	\frac{1}{2}(c^{2} - 1)\hat{U}_{z}^{2} + \frac{1}{4}\hat{U}^{4} - \frac{1}{2}\hat{U}^{2} = E.
	\end{align*}
	\Cref{fig:f3} shows the phase portraits for subluminal and superluminal travelling wave solutions to \cref{eq:standwavenlkg}. In the subluminal case in \cref{fig:f3a}, we note that the separatrix corresponds to $ E = -\frac{1}{4} $, and we have that $ -\frac{1}{4} < E < 0 $. For superluminal waves in \cref{fig:f3b}, $ E > 0 $ corresponds to waves outside the homoclinic orbit, while $ -\frac{1}{4} < E < 0 $ corresponds to waves within one branch of the homoclinic orbit. Given the symmetry of the phase portraits due to the potential $ V(u) = \frac{1}{4}u^{4} - \frac{1}{2}u^{2} $ being even in $ u $, there is no difference in the spectra of waves chosen within the left or right branch of the homoclinic orbit for equal values of $ E $.	 Without loss of generality we chose to consider waves within the right branch of the homoclinic orbit.
	\begin{figure}[t]
		\centering
		\begin{subfigure}[t]{0.4 \textwidth}
			\captionsetup{justification=centering,margin=1cm}
			\centering
			\includegraphics[width=\linewidth]{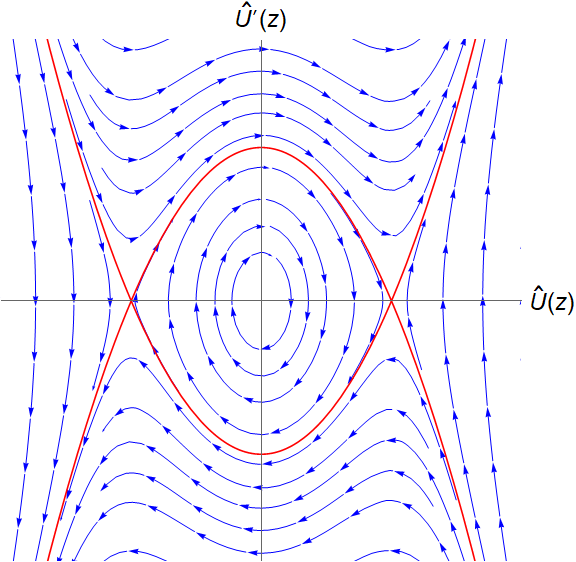}
			\caption{Subluminal $ c^{2} < 1 $.}
			\label{fig:f3a}
		\end{subfigure}
		\begin{subfigure}[t]{0.4 \textwidth}
			\captionsetup{justification=centering,margin=1cm}
			\centering
			\includegraphics[width=\linewidth]{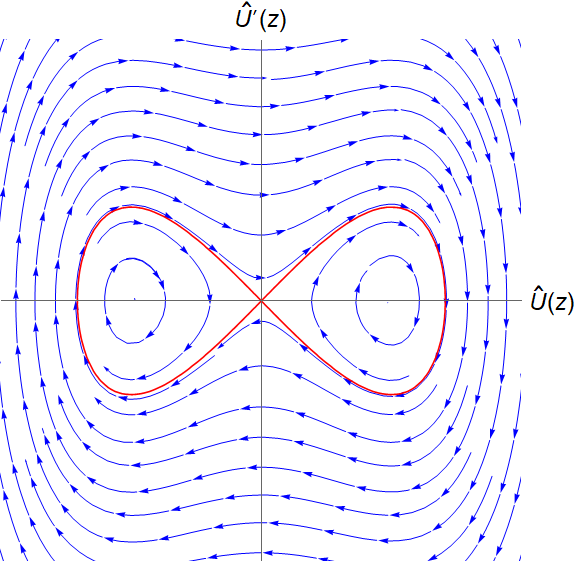}
			\caption{Superluminal $ c^{2} > 1 $.}
			\label{fig:f3b}
		\end{subfigure}
		\caption{Phase portraits for \cref{eq:standwavenlkg} with $ V(\hat{U}) = \frac{1}{4}\hat{U}^{4} - \frac{1}{2}\hat{U}^{2} $. In the subluminal case, the librational periodic solutions $ \hat{U} $ are within the separatrix (drawn in red). For superluminal waves, all periodic waves are librational, and these exist both outside the homoclinic orbit (drawn in red) and inside.}
		\label{fig:f3}
	\end{figure}
	In \cref{fig:f4}, we numerically compute the spectra of several waves using the Evans function $ D_{2}(\zeta;\theta) $. We chose the waves which produced qualitatively different spectra, however we have not proved that this list is exhaustive. We note that all observed waves are spectrally unstable; in the subluminal case, our numerical results agree with the analysis of \cite{Pal2020}.
	\begin{figure}[h!]
		\centering
		\begin{subfigure}[t]{0.32 \textwidth}
			\captionsetup{justification=centering}
			\centering
			\includegraphics[width=\linewidth]{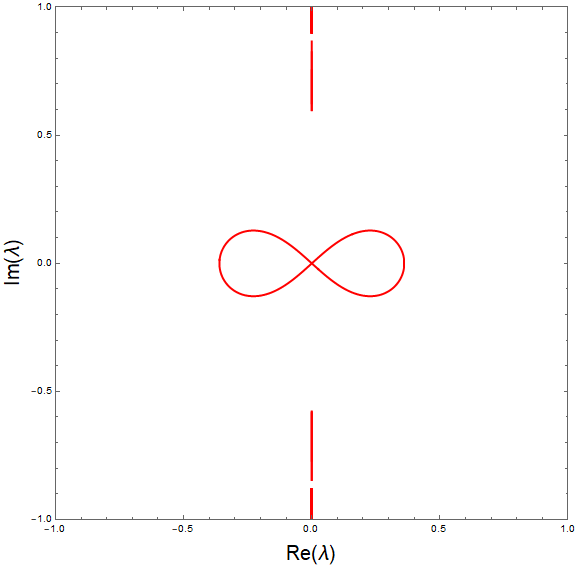}
			\caption{$E = -0.216,\; c = 0.8 $.}
			\label{fig:f4a}
		\end{subfigure}
		\begin{subfigure}[t]{0.32 \textwidth}
			\captionsetup{justification=centering}
			\centering
			\includegraphics[width=\linewidth]{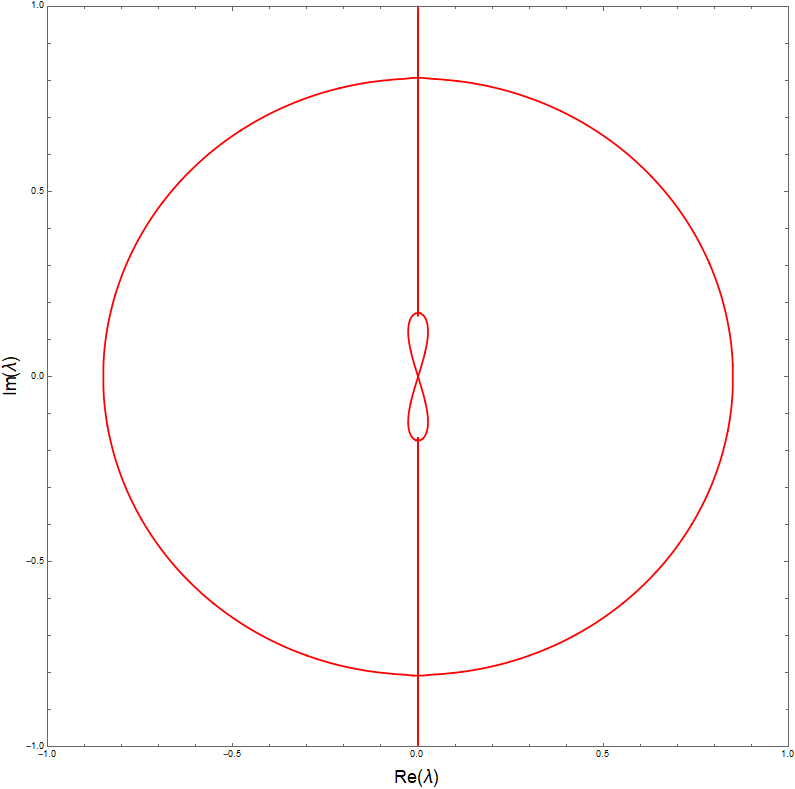}
			\caption{$E = -0.082875,\; c = 0.95 $.}
			\label{fig:f4b}
		\end{subfigure}
		\begin{subfigure}[t]{0.32 \textwidth}
			\captionsetup{justification=centering}
			\centering
			\includegraphics[width=\linewidth]{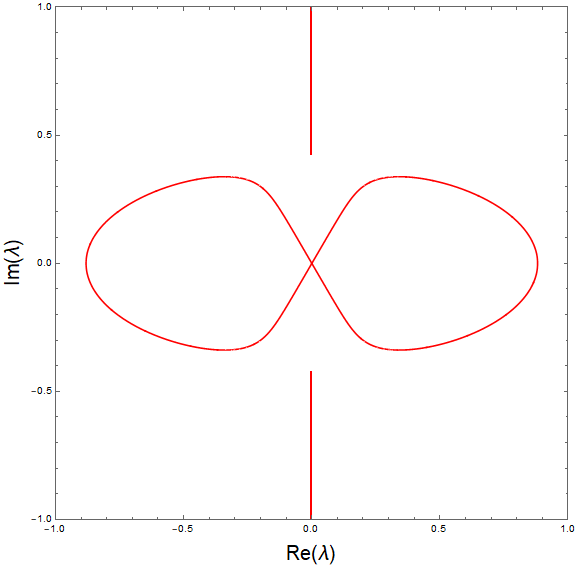}
			\caption{$E = -0.07,\; c = 0.45 $.}
			\label{fig:f4c}
		\end{subfigure}
		\begin{subfigure}[t]{0.32 \textwidth}
			\captionsetup{justification=centering}
			\centering
			\includegraphics[width=\linewidth]{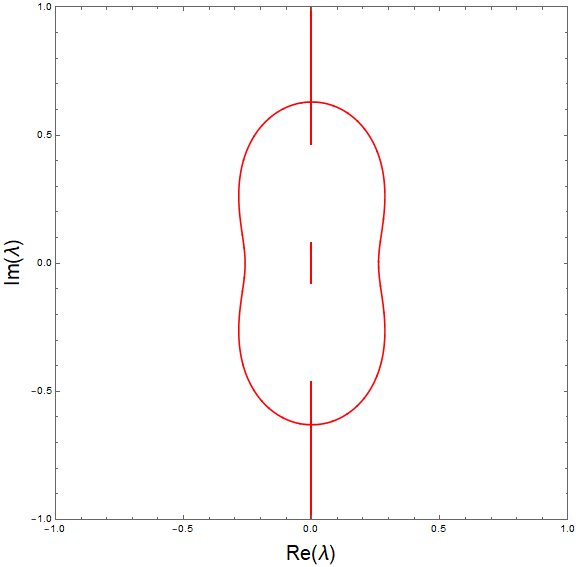}
			\caption{$E = 0.01,\; c = 1.1 $.}
			\label{fig:f4d}
		\end{subfigure}
		\begin{subfigure}[t]{0.32 \textwidth}
			\captionsetup{justification=centering}
			\centering
			\includegraphics[width=\linewidth]{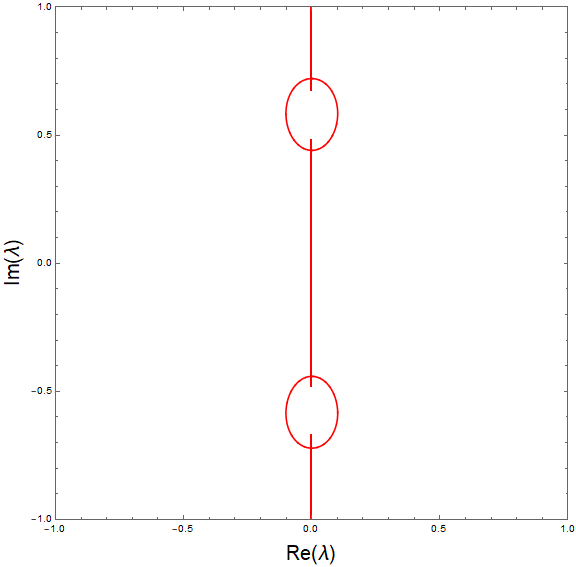}
			\caption{$E = 0.5,\; c = 1.1 $.}
			\label{fig:f4e}
		\end{subfigure}
		\begin{subfigure}[t]{0.32 \textwidth}
			\captionsetup{justification=centering}
			\centering
			\includegraphics[width=\linewidth]{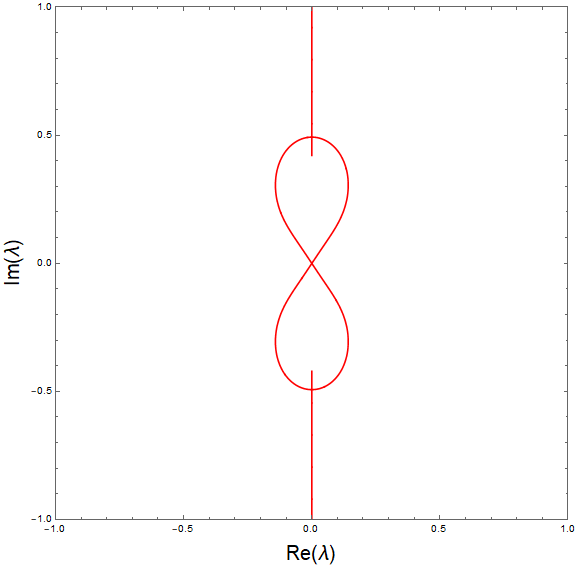}
			\caption{$E = -0.05,\; c = 1.1 $.}
			\label{fig:f4f}
		\end{subfigure}
		\caption{Numerical plots of the spectra $ \sigma $ of various periodic travelling wave solutions to the nonlinear Klein-Gordon \cref{eq:nlKG} with potential $ V(u) = \frac{u^{4}}{4} - \frac{u^{2}}{2} $. Row 1 depicts the spectra for subluminal waves, while row 2 corresponds to superluminal waves.}
		\label{fig:f4}
	\end{figure}
	\Cref{fig:f5} shows the corresponding phase portraits of the waves whose spectra are included in \cref{fig:f4}.
	\begin{figure}[h!]
		\centering
		\begin{subfigure}[t]{0.28 \textwidth}
			\captionsetup{justification=centering}
			\centering
			\includegraphics[width=\linewidth]{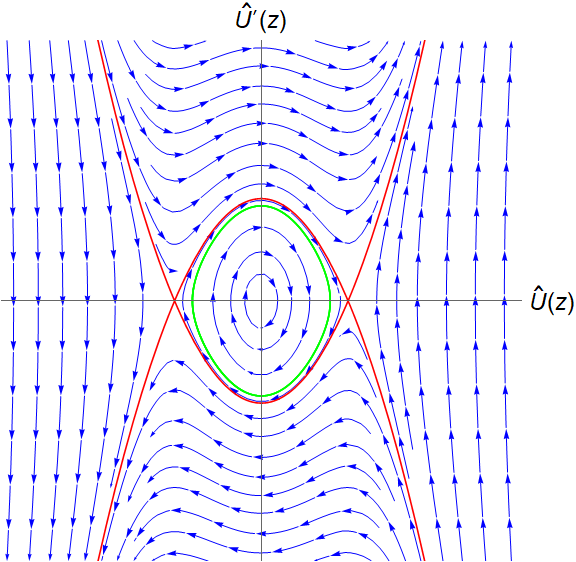}
			\caption{$E = -0.216,\; c = 0.8 $.}
			\label{fig:f5a}
		\end{subfigure}
		\begin{subfigure}[t]{0.28 \textwidth}
			\captionsetup{justification=centering}
			\centering
			\includegraphics[width=\linewidth]{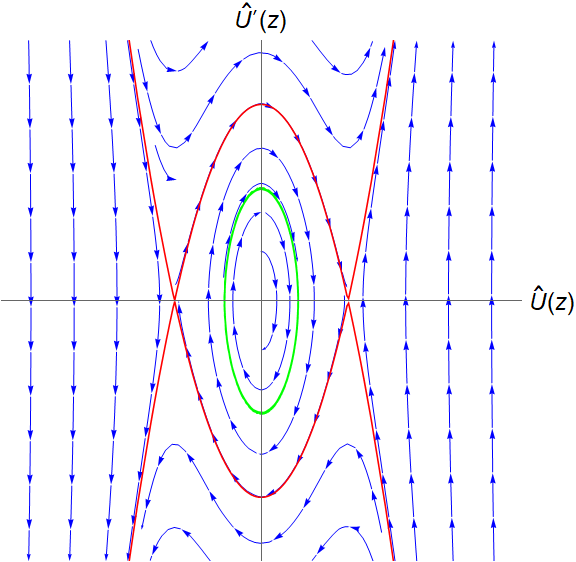}
			\caption{$E = -0.082875,\; c = 0.95 $.}
			\label{fig:f5b}
		\end{subfigure}
		\begin{subfigure}[t]{0.28 \textwidth}
			\captionsetup{justification=centering}
			\centering
			\includegraphics[width=\linewidth]{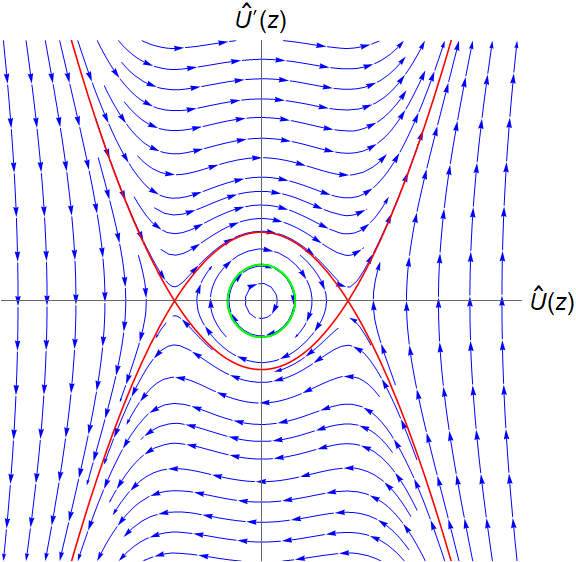}
			\caption{$E = -0.07,\; c = 0.45 $.}
			\label{fig:f5c}
		\end{subfigure}
		\begin{subfigure}[t]{0.28 \textwidth}
			\captionsetup{justification=centering}
			\centering
			\includegraphics[width=\linewidth]{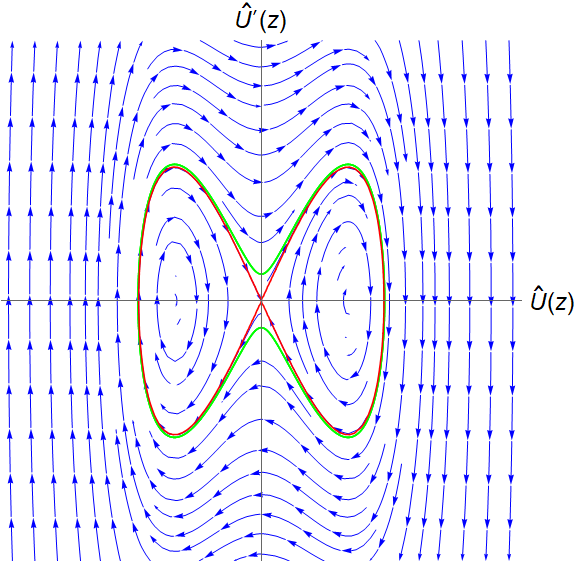}
			\caption{$E = 0.01,\; c = 1.1 $.}
			\label{fig:f5d}
		\end{subfigure}
		\begin{subfigure}[t]{0.28 \textwidth}
			\captionsetup{justification=centering}
			\centering
			\includegraphics[width=\linewidth]{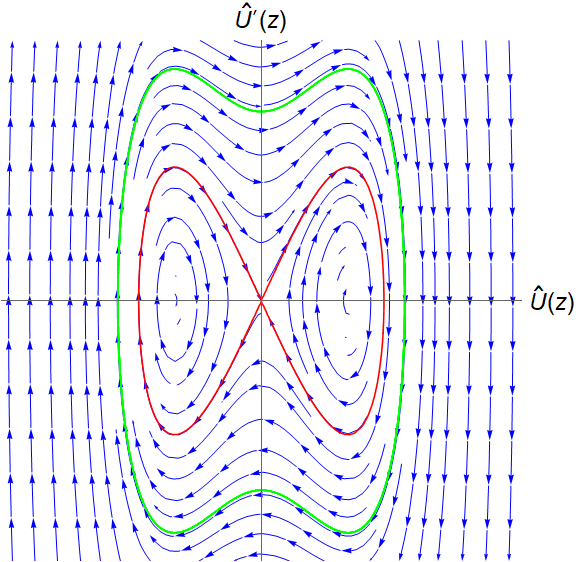}
			\caption{$E = 0.5,\; c = 1.1 $.}
			\label{fig:f5e}
		\end{subfigure}
		\begin{subfigure}[t]{0.28 \textwidth}
			\captionsetup{justification=centering}
			\centering
			\includegraphics[width=\linewidth]{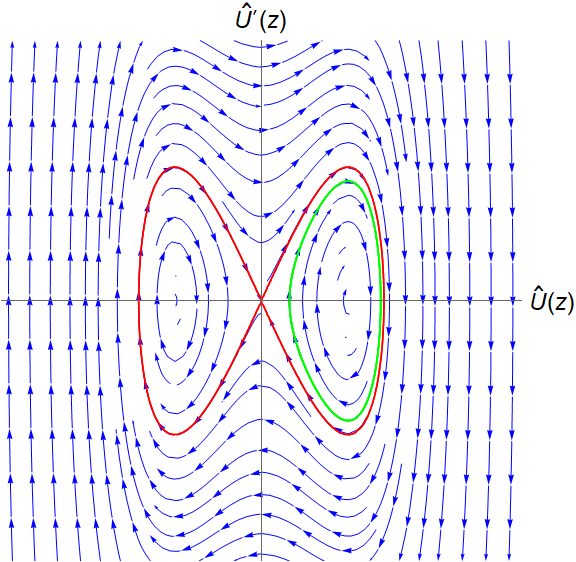}
			\caption{$ E = -0.05,\; c = 1.1 $.}
			\label{fig:f5f}
		\end{subfigure}
		\caption{Phase portraits corresponding to the periodic travelling waves in \cref{fig:f4}. The chosen waves are shown in green, while the separatrices are shown in red.}
		\label{fig:f5}
	\end{figure}
	In \cref{fig:f6}, we capture two bifurcations on the imaginary axis as $ \theta $ is varied for the wave corresponding to \cref{fig:f5b}. We observe a phenomenon discussed in \cite[\S 6]{KM} where characteristic values of opposite Krein signature pass through each other instead of undergoing a Hamiltonian-Hopf bifurcation. A simple characteristic value of Krein signature $ \kappa = -1 $ (denoted by a blue dot) bifurcates onto the real axis in the left plot in the bottom row of \cref{fig:f6}. It passes through the simple characteristic values of Krein signature $ \kappa = 1 $ until it collides with a characteristic value with $ \kappa = 1 $ at $ \zeta \approx 0.8  $, bifurcating off the $ \zeta $-axis. The $ \zeta $ values where the bifurcations take place correspond to the values of $ \lambda $ where the spectrum leaves the imaginary axis, as denoted by the grey arrows. We observed the same phenomenon for the wave whose spectrum is plotted in \cref{fig:f4d}.
	\begin{figure}[h!]
		\centering
		\includegraphics[width=\linewidth]{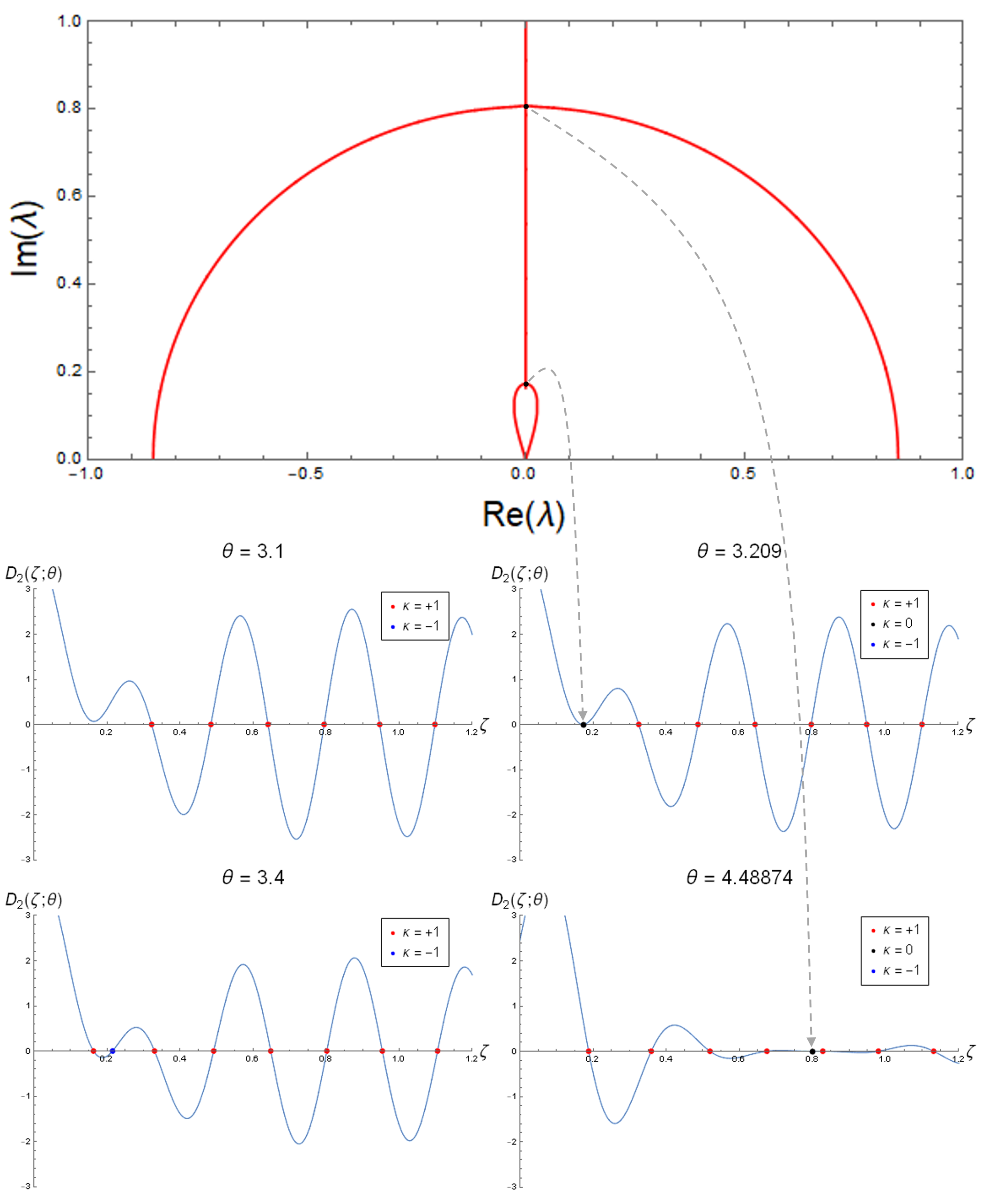}
		\caption{We track the bifurcations in the upper half-plane of the spectrum $ \sigma $ in \cref{fig:f4b} as $ \theta $ is varied. See \cref{fig:f5b} for the original wave. In the middle and bottom rows, we plot the Evans function $ D_{2}(\zeta;\theta) $ for the values: $ \theta = 3.1 $, $ \theta = 3.209 $, $ \theta = 3.4 $, $ \theta = 4.48874 $. The black dots correspond to the values $ \zeta $ where Hamiltonian-Hopf bifurcations occur, and we have labelled these on the spectral diagram in the top row with black dots as well.}
		\label{fig:f6}
	\end{figure}
	\section{Discussion and conclusion}
	In this paper, we apply Floquet theory to results for Hill's equation in \cite{Jones13} to construct a new Evans function for quasi-periodic solutions of the linearised sine-Gordon equation. When compared to the Evans function in \cite{Jones14}, this new Evans function simplifies the calculation of Krein signatures of simple characteristic values when using the Evans-Krein function method. These Krein signatures allow us to track Hamiltonian-Hopf bifurcations in the spectrum of the sine-Gordon equation in terms of the Floquet exponent. This distinguishes our method from \cite{Marangell2015} in which the authors use the Floquet multipliers to develop a criterion for the existence of such bifurcations. As a check on the correctness of our methods, we use this new Evans function to numerically compute spectra for different periodic travelling wave solutions of the sine-Gordon equation, replicating the results of \cite{Jones13,Jones14,Marangell2015}. Finally, as an example of how to extend our Evans function, we use it to compute the spectrum in a general nonlinear Klein-Gordon equation, producing spectral diagrams and calculating Krein signatures for the potential $ V(u) = \frac{1}{4}u^{4} - \frac{1}{2}u^{2} $.
	\section{Acknowledgements}
	The authors would like to thank Peter Miller for his helpful suggestions for improving our paper, Dave Smith for his comments which clarified our notation, and the referees for their helpful suggestions for improving our paper. W.~Clarke would like to thank Henrik Schumacher for his insightful discussion about speeding up our code. R.~Marangell acknowledges the support of the Australian Research Council under grant DP200102130. 
	\bibliographystyle{amsalpha}  


\begin{thebibliography}{1}
		
		\bibitem[AGJ1990]{AGJ1990}
		{\sc J.~Alexander, R.~Gardner and C.~Jones},
		{\em A topological invariant arising in the stability analysis of travelling waves},
		J. reine angew. Math. {\bf 410} (1990), 167--212.
		
		\bibitem[BP1982]{BP}
		{\sc A.~Barone and G.~Patern\`o},
		{\em Physics and Applications of the Josephson Effect},
		John Wiley \& Sons, Inc. (1982).
		
		\bibitem[BEMS1971]{BEM}
		{\sc A.~Barone, F.~Esposito, C.J.~Magee and A.C.~Scott},
		{\em Theory and applications of the sine-gordon equation},
		La Rivista del Nuovo Cimento {\bf 1} (1971), no. 2, 227--267.
		
		\bibitem[BJK2011]{BJK2011}
		{\sc J.C.~Bronski, M.A.~Johnson and T.~Kapitula},
		{\em An index theorem for the stability of periodic travelling waves of Korteweg-de Vries type},
		Proc. Roy. Soc. Edinburgh: Sect. A {\bf 141} (2011), no. 6, 1141--1173.
		
		\bibitem[BJK2014]{BJK2014}
		{\sc J.C.~Bronski, M.A.~Johnson and T.~Kapitula},
		{\em An instability index theory for quadratic pencils and applications},
		Comm. Math. Phys. {\bf 327} (2014), no. 2, 521--550.
		
		\bibitem[CFMT2019]{CFMT}
		{\sc M.~Cadoni, E.~Franzin, F.~Masella and M. Tuveri},
		{\em A Solution-Generating Method in Einstein-Scalar Gravity},
		Acta Applicandae Mathematicae {\bf 162} (2019), no. 1, 33--45.
		
		\bibitem[DDKS2012]{DDKS}
		{\sc G.~Derks, A.~Doelman, J.K.~Knight and H.~Susanto},
		{\em Pinned fluxons in a Josephson junction with a finite-length inhomogeneity},
		European Journal of Applied Mathematics {\bf 23} (2012), no. 2, 201--244.
		
		\bibitem[DDGV2003]{DDGV}
		{\sc G.~Derks, A.~Doelman, S.A.~van Gils and T.~Visser},
		{\em Travelling waves in a singularly perturbed sine-Gordon equation},
		Physica D: Nonlinear Phenomena {\bf 180} (2003), no. 1-2, 40--70.
		
		\bibitem[DG2011]{DG}
		{\sc G.~Derks and G.~Gaeta},
		{\em A minimal model of DNA dynamics in interaction with RNA-Polymerase},
		Physica D: Nonlinear Phenomena {\bf 240} (2011), no. 22, 1805--1817.
		
		\bibitem[Eva1972]{Evans}
		{\sc J.W.~Evans},
		{\em Nerve Axon Equations: III Stability of the Nerve Impulse},
		Indiana University Mathematics Journal {\bf 22} (1972), no. 6, 577--593.
		
		\bibitem[Gar1993]{Gar1993}
		{\sc R.A.~Gardner},
		{\em On the structure of the spectra of periodic travelling waves},
		J. Math. Pures Appl. {\bf 72} (1993), no. 5, 415--439.
		
		\bibitem[Gar1997]{Gar1997}
		{\sc R.A.~Gardner},
		{\em Spectral analysis of long wavelength periodic waves and applications},
		J. Reine Angew. Math. {\bf 491} (1997), 149--181.
		
		\bibitem[HK2008]{HK2008}
		{\sc M.~H\v{a}r\v{a}gu{\c{s}} and T.~Kapitula},
		{\em On the spectra of periodic waves for infinite-dimensional Hamiltonian systems},
		Physica D: Nonlinear Phenomena {\bf 237} (2008), no. 20, 2649--2671.
		
		\bibitem[Jones1984]{Jones84}
		{\sc C.K.R.T.~Jones},
		{\em Stability of the travelling wave solution of the FitzHugh-Nagumo system},
		Transactions of the American Mathematical Society {\bf 286} (1984), no. 2, 431--469.
		
		\bibitem[JMMP2013]{Jones13}
		{\sc C.K.R.T.~Jones, R.~Marangell, P.D.~Miller and R.G.~Plaza},
		{\em On the stability analysis of periodic sine-Gordon traveling waves},
		Physica D: Nonlinear Phenomena {\bf 251} (2013), 63--74.
		
		\bibitem[JMMP2014]{Jones14}
		{\sc C.K.R.T.~Jones, R.~Marangell, P.D.~Miller and R.G.~Plaza},
		{\em Spectral and modulational stability of periodic wavetrains for the nonlinear Klein-Gordon equation},
		Journal of Differential Equations {\bf 257} (2014), no. 12, 4632--4703.
		
		\bibitem[Kap2010]{Kap2010}
		{\sc T.~Kapitula},
		{\em The Krein signature, Krein eigenvalues, and the Krein oscillation theorem},
		Indiana Univ. Math. J. {\bf 59} (2010), no. 4, 1245--1275.
		
		\bibitem[KKY2013]{KKY2013}
		{\sc T.~Kapitula, P.G.~Kevrekidis and D.~Yan},
		{\em The Krein matrix: general theory and concrete applications in atomic Bose-Einstein condensates},
		SIAM J. Appl. Math. {\bf 73} (2013), no. 4, 1368--1395.
		
		\bibitem[Kat1976]{Kato}
		{\sc T.~Kato},
		{\em Perturbation Theory for Linear Operators},
		Berlin: Springer (1976).
		
		\bibitem[Kno2000]{Knobel}
		{\sc R.~Knobel},
		{\em An Introduction to the Mathematical Theory of Waves}, American Mathematical Society (2000), Providence, R.I.
		
		\bibitem[KDT2019]{KDT2019}
		{\sc R.~Koll{\'a}r, B.~Deconinck and O.~Trichtchenko},
		{\em Direct characterization of spectral stability of small-amplitude periodic waves in scalar Hamiltonian problems via dispersion relation},
		SIAM J. Math. Anal. {\bf 51} (2019), no. 4, 3145--3169.
		
		\bibitem[KM2014]{KM}
		{\sc R.~Koll{\'a}r and P.D.~Miller},
		{\em Graphical Krein Signature Theory and Evans-Krein Functions},
		SIAM Review {\bf 56} (2014), no. 1, 73--123.
		
		\bibitem[LLM2011]{Lafortune}
		{\sc S.~Lafortune, J.~Lega and S.~Madrid},
		{\em Instability of Local Deformations of an Elastic Rod: Numerical Evaluation of the Evans Function},
		SIAM Journal on Applied Mathematics, \textbf{71} (2011), no. 5, 1653--1672.
		
		\bibitem[MW2013]{MW}
		{\sc W.~Magnus and S.~Winkler},
		{\em Hill's equation},
		Courier Corporation (2013).
		
		\bibitem[MM2015]{Marangell2015}
		{\sc R.~Marangell and P.D.~Miller},
		{\em Dynamical Hamiltonian-Hopf instabilities of periodic traveling waves in Klein-Gordon equations},
		Physica D: Nonlinear Phenomena {\bf 308} (2015), 87--93.
		
		\bibitem[Mar1988]{Markus1988}
		{\sc A.S.~Markus},
		{\em Introduction to the spectral theory of polynomial operator pencils},
		American Mathematical Society (1988), Providence, R.I.
		
		\bibitem[Nat2011]{Natali2011}
		{\sc F.~Natali},
		{\em On periodic waves for sine- and sinh-Gordon equations},
		J. Math. Anal. Appl. {\bf 379} (2011), no. 1, 334--350.
		
		\bibitem[Pal2020]{Pal2020}
		{\sc J.M.~Palacios},
		{\em Orbital stability and instability of periodic wave solutions for the $\phi^4$-model}, Preprint (2020), arXiv:2005.09523.
		
		\bibitem[PN2014]{PN2014}
		{\sc J.A.~Pava and F.~Natali},
		{\em (Non)linear instability of periodic traveling waves: Klein-Gordon and KdV type equations},
		Advances in Nonlinear Analysis {\bf 3} (2014), no. 2, 95--123.
		
		\bibitem[PP2016]{PP2016}
		{\sc J.A.~Pava and R.G.~Plaza},
		{\em Transverse orbital stability of periodic traveling waves for nonlinear Klein-Gordon equations},
		Stud. Appl. Math. {\bf 137} (2016), no. 4, 473--501.
		
		\bibitem[Sco1969]{Scott}
		{\sc A.C.~Scott},
		{\em Waveform Stability on a Nonlinear Klein-Gordon Equation},
		Proceedings of the IEEE, \textbf{57} (1969), no. 7, 1338--1339.
		
		\bibitem[SS2012]{Stanislavova}
		{\sc M.~Stanislavova and A.~Stefanov},
		{\em Linear stability analysis for travelling waves of second order in time PDE's},
		Nonlinearity, \textbf{25} (2012), no. 9, 2625--2654.
		
		\bibitem[TDK2018]{TDK2018}
		{\sc O.~Trichtchenko, B.~Deconinck and R.~Koll{\'a}r},
		{\em Stability of periodic traveling wave solutions to the Kawahara equation},
		SIAM J. Appl. Dyn. Syst. {\bf 17} (2018), no. 4, 2761--2783.
	\end{thebibliography}

\end{document}